\newcommand{\Kl}{\left(}
\newcommand{\Kr}{\right)}
\newcommand{\El}{\left[}
\newcommand{\Er}{\right]}
\newcommand{\Ra}{\rightarrow}
\newcommand{\hh}{\hspace{4pt}}
\newcommand{\RR}{\mathbf{R}}
\newcommand{\N}{\mathcal{N}}
\def\R{\mathbf{ R}}
\def\P{\mathcal{P}}
\def\E{\mathcal{E}}
\def\L{\mathcal{L}}
\newcommand{\Keywords}[1]{\par\indent
{\small{\textbf{Key words.} \/} #1}}
\newcommand{\Ptwo}{\mathcal{P}_2}
\newcommand{\abs}[1]{\left\lvert#1\right\rvert}
\renewcommand{\det}{\mathop{\mathrm{det}}}
\newcommand{\tr}{\mathop{\mathrm{tr}}}
\newtheorem{theorem}{Theorem}[section]
\newtheorem{proposition}[theorem]{Proposition}
\newtheorem{remark}[theorem]{Remark}
\newenvironment{definition}[1][Definition]{\begin{trivlist}
\item[\hskip \labelsep {\bfseries #1}]}{\end{trivlist}}
\begin{document}

\title{Asymptotic equivalence of the discrete variational functional and a rate-large-deviation-like functional in the Wasserstein gradient flow of the porous medium equation}
\author{Manh Hong Duong}

\affil{Department of Mathematics and Computer Sciences, Technische Universiteit Eindhoven, The Netherlands. \\
Email: m.h.duong@tue.nl}
\date{\today}
%
%

\maketitle

\begin{abstract}
In this paper, we study the Wasserstein gradient flow structure of the porous medium equation. We prove that, for the case of $q$-Gaussians on the real line, the functional derived by the JKO-discretization scheme is asymptotically equivalent to a rate-large-deviation-like functional. The result explains why the Wasserstein metric as well as the combination of it with the Tsallis-entropy play an important role.
\end{abstract}

\Keywords{Porous medium equation, Gamma-convergence, Wasserstein gradient flow, variational methods}

\section{Introduction}
\subsection{The porous medium equation}
In a seminal paper, Otto~\cite{Otto01} shows that the porous medium equation
\begin{equation}
\label{eq:PMEequation}
\partial_t\rho(t,x) =\Delta \rho^{2-q}(t,x),\quad\text{for}\quad(t,x)\in (0,\infty)\times\RR^d\quad \text{and}~\rho(0,x)=\rho_0(x),
\end{equation}
can be interpreted as a gradient flow of the $q$-Tsallis entropy functional\footnote{$E_1$ is the Boltzmann-Shannon entropy},
\begin{equation}
E_q(\rho)=
\begin{cases}\frac{1}{1-q}\int_{\R^d} \rho(x)\El \rho(x)^{1-q}-1\Er dx\quad \text{if}\quad q\neq 1,\\
\int_{\R^d} \rho(x)\log \rho(x)dx\quad \text{if}\quad q=1,
\end{cases}
\end{equation}
 with respect to the Wasserstein distance $W_2$ on the space of probability measures with finite second moment $\P_2(\R^d)$,
\begin{equation}
\label{eq: W_2}
W_2(\mu,\nu)=\Big\{\inf_{P\in\Gamma(\mu,\nu)}\int_{\R^{d}\times\R^d}\! \abs{x-y}^2 P(dxdy)\Big\}^\frac{1}{2},
\end{equation}
where $\Gamma(\mu,\nu)$ is the set of all probability measures on $\R^{d}\times\R^d$ with  marginals $\mu$ and~$\nu$.


This statement can be understood in a variety of different ways. In~\cite{Otto01}, Otto shows this from a differential geometry point of view by considering $(\P_2(\R^d), W_2)$ as an infinite dimensional Riemannian manifold. However, for the purpose of this paper, the most useful way is that the solution $t\mapsto\rho(t,x)$ can be approximated by the JKO-discretization scheme. Let $T>0$ be given and $h>0$ be a time step. The discrete approximated solution $\rho^{(n)}=\rho(nh); n=0,\dots,\lfloor\frac{T}{h}\rfloor$ is defined recursively by (see also~\cite{JKO98,WW10} and~\cite{AGS08} for an exposition of this subject)
\begin{align}
&\rho^{(0)}=\rho_0,\nonumber
\\&\rho^{(n)}\in \mathrm{argmin}_{\rho\in \Ptwo(\RR^d)}K_h(\rho,\rho^{(n-1)}),\quad K_h(\rho,\rho^{(n-1)})=\frac{1}{4h}W_2^2(\rho,\rho^{(n-1)})+\frac{1}{2}( E_q(\rho)-E_q(\rho^{(n-1)})).\label{eq:discrete scheme}
\end{align}
The theory of Wasserstein gradient flow has been developed tremendously since~\cite{JKO98,Otto01}. At least two different approaches have been investigated. The first one is to explore a larger class of evolution equations that are Wasserstein (gernalized/modified) gradient flows. Many equations are now proven to belong to this class~\cite{GO01,Gla03,Agu05,GST09,MMS09,Lis09,FG10,Mie11b,LMS12}. Recently some attempts have been made to extend the theory to discrete setting~\cite{Maa11,Mie12} or to systems that contain also conservative behavior\cite{Hua00,HJ00,DPZ12TMP}. The second approach is to understand why the Wasserstein metric and the combination of it with the entropy functional appear in the setting. This direction recently has been received a lot of attraction. In~\cite{ADPZ11,DLZ12,DLR12,PRV11,Leo12}, the authors address this question, for the case of the linear diffusion equation, i.e., for $q=1$, by establishing an intriguing connection between a microscopic many-particle model and the macroscopic gradient flow structure of the diffusion equation. They show that the functional $K_h$ in~\eqref{eq:discrete scheme} is asymptotically equivalent as $h\rightarrow 0$ to a discrete rate functional $\widetilde{J}_h$ that comes from the large deviation principle of the microscopic model. We now briefly recall the results in these papers in more details.
The rate functional $\widetilde{J}_h\colon\P_2(\R^d)\rightarrow [0,+\infty]$ is defined by
\begin{equation}
\label{eq:J_h diffusion}
\widetilde{J}_h(\rho|\rho_0)=\inf_{Q\in \Gamma(\rho_0,\rho)}H(Q\|\widetilde{Q}_{0\rightarrow h}),
\end{equation}
where
\begin{equation*}
\widetilde{Q}_{0\rightarrow h}(dxdy)=p_h(x,y)\rho_0(dx)dy;\qquad p_h(x,y)=\frac{1}{(4\pi h)^\frac{d}{2}}e^{-\frac{\abs{x-y}^2}{4h}},
\end{equation*}
and
$H(Q\|\widetilde{Q}_{0\rightarrow h})$ is the relative entropy of $Q$ with respect to $\widetilde{Q}_{0\rightarrow h}$
\begin{equation*}
H(Q\|\widetilde{Q}_{0\rightarrow h})=\int_{\R^{2d}}
\log\Kl\frac{dQ}{d_{Q_{0\rightarrow h}}}\Kr dQ,\quad\text{if}~\frac{dQ}{d_{Q_{0\rightarrow h}}}~\text{exists and otherwise being }~+\infty.
\end{equation*}
Note that in the case of $d=1$ and $\rho_0(x)=\N(\mu_0,\sigma_0^2)$, then $\widetilde{Q}_{0\rightarrow h}$ is a bivariate Gaussian $\N(\mu_0,\sigma_0,\mu_0,\sigma_h^2,\theta_h)$, where $\sigma_h^2=\sigma_0^2+2h,\quad \text{and}\quad \theta_h=\frac{\sigma_0}{\sigma_h}$.

It is proven in~\cite{Leo12}\footnote{This result was appeared first in 2007 in his preprint paper~\cite{Leo07}} that
\begin{equation}
\label{eq:1st order Gamma diffusion eqn}
4h\widetilde{J}_h(\cdot,\rho_0)\overset{\Gamma}{\rightarrow}W_2^2(\cdot,\rho_0)\quad\text{as}\quad h\rightarrow 0,
\end{equation}
where $\Gamma$ is denoted for Gamma-convergence, which is introduced in Section~\ref{sec: review}. This result is improved to the next order in~\cite{ADPZ11,DLZ12,DLR12,PRV11} to give
\begin{equation}
\label{eq: 2nd second order Gamma diffusion eqn}
\widetilde{J}_h(\cdot|\rho_0)-\frac{1}{4h}W_2^2(\cdot,\rho_0)\overset{\Gamma}{\rightarrow}\frac{1}{2}(E_1(\cdot)-E_1(\rho_0))\quad\text{as}\quad h\rightarrow 0.
\end{equation}
\eqref{eq:1st order Gamma diffusion eqn} and~\eqref{eq: 2nd second order Gamma diffusion eqn} indicate the asymptotic Gamma development of the functional $\widetilde{J}_h(\cdot|\rho_0)$
\begin{equation}
\label{eq:Gammadevelopment diff eqn}
\widetilde{J}_h(\rho|\rho_0)\approx\frac{1}{4h}W_2^2(\rho,\rho_0)+\frac{1}{2}(E_1(\rho)-E_1(\rho_0))+o(1)\quad\text{as}\quad h\rightarrow 0.
\end{equation}

It is worth pointing out that the right hand side of~\eqref{eq:Gammadevelopment diff eqn} is exactly the functional $K_h$ in the variational scheme~\eqref{eq:discrete scheme}. It is this asymptotic Gamma development that not only provides the link between the microscopic model and the Wasserstein gradient flow formulation of the linear diffusion equation but also explains why the Wassertein metric as well as the combination of it with the entropy play a role in the setting.

The aim of this paper is to generalise~\eqref{eq:1st order Gamma diffusion eqn}-~\eqref{eq: 2nd second order Gamma diffusion eqn} to the nonlinear porous medium equation for the class of $q$-Gaussian measures in 1D. As we see in section~\ref{sec: review}, this class plays an important role because it is invariant under the semigroup of the porous-medium equation and is isometric to the space of Gaussian measures with respect to the Wasserstein metric. We now describe our result in the next Section.

\subsection{Main result}
Before introducing the main result of the present paper, we recall corresponding concepts for the $q$-Gaussian measures.

The $q$-exponential function and its inverse, the  $q$-logarithmic function, are defined respectively by
\begin{equation}
\label{eq:qexp}
\exp_q(t)=\El 1+(1-q)t\Er_+^\frac{1}{1-q},
\end{equation}
where $[x]_+=\max\{0,x\}$ and by convention $0^a\colonequals\infty$; and
\begin{equation}
\label{eq:qlog}
\log_q(t)=\frac{t^{1-q}-1}{1-q} \qquad \text{for}~ t>0.
\end{equation}
Given $m$, that is specified later on, the $m$-relative entropy between $Q(dx)=f(x)dx $ and $P(dx)=g(x)dx $
\begin{align}
\label{eq:q-relative entropy}
H_m(Q\big\|P)&=\frac{1}{2-m}\int \El f\log_mf-g\log_m g-(2-m)\log_m g(f-g)\Er dx
\\&=\frac{1}{2-m}\int \El f\log_mf+(1-m)g\log_m g-(2-m)f\log_mg\Er dx.
\end{align}
For $v\in \RR^d$ and $V\in\mathrm{Sym}^+(d,\RR)$, which is the set of all symmetric positive definite matrices of size $d$, the $q$-Gaussian measure with mean $v$ and covariance matrix $V$ is
\begin{equation}
\label{eq:qGaussian}
\N_q(v,V)=C_0(q,d)(\det V)^\frac{1}{2}\exp_q\El-\frac{1}{2}C_1(q,d)\langle x-v,V^{-1}(x-v)\rangle\Er\L^d,
\end{equation}
where $\L^d$ is the Lesbesgue measure on $\R^d$ and $C_0(q,d), C_1(q,d)$ are explicit positive constants depending only on $d$ and $q$ and are given in section~\ref{sec: review}.

In particular, the $q$-Gaussian measure in $1$D has density
\begin{equation}
\N_q(\mu,\sigma^2)=\frac{C_0(q,1)}{\sigma}\exp_q\Kl-\frac{1}{2}C_1(q,1)\frac{(x-\mu)^2}{\sigma^2}\Kr.
\end{equation}
While in $2$D, we call $q$-bivariate Gaussian $\N_q(\mu_1,\sigma_1^2,\mu_2,\sigma_2^2,\theta)$ with density
\begin{equation}
\upnu(x,y)=\frac{C_0(q,2)}{\sigma_1\sigma_2\sqrt{1-\theta^2}}\exp_q\left\{-\frac{1}{2}C_1(q,2)\frac{1}{1-\theta^2}\El\frac{(x-\mu_1)^2}{\sigma_1^2}+\frac{(y-\mu_2)^2}{\sigma_2^2}-\frac{2\theta(x-\mu_1)(y-\mu_2)}{\sigma_1\sigma_2}\Er\right\}
\end{equation}
which corresponds to the mean vector $\boldsymbol{\mu}$ and covariance matrix $\Sigma$
\begin{equation*}
\boldsymbol{\mu}=
    \begin{pmatrix}
      \mu_{1} \\
      \mu_{2}
    \end{pmatrix},\quad
  \Sigma=
  \begin{pmatrix}
    \sigma_{1}^{2}& \theta\sigma_{1}\sigma_{2}  \\
    \theta\sigma_{1}\sigma_{2} & \sigma_{2}^{2}
  \end{pmatrix}.
 \end{equation*}
It is shown in section~\ref{sec: review} that, if the initial data is $\rho_0(x)=\N_q(\mu_0,C\sigma_0^2)$, then the solution to the porous medium equation at time $t$ is $\rho(t,x)=\N_q(\mu_0,C(t+\sigma_0^{3-q})^\frac{2}{3-q})$, where $C$ is an explicit constant given in section~\eqref{eq:C-formula}.

We are now in the position to introduce the main result.
\begin{theorem}
Let $d=1,\quad q\in Q_d\equiv (0,1)\cup\Kl1,\frac{d+4}{d+2}\Kr $ and $\N_0=\N_q(\mu_0,C\sigma_0^2)$ be given.

We set $m=3-\frac{2}{q},\quad \sigma_h^2=(h+\sigma_0^{3-q})^\frac{2}{3-q}$ and
\begin{equation}
\label{eq: Q_0h}
Q_{0\Ra h}=\N_{m}(\mu_0,C\sigma_0^2,\mu_0,C\sigma_h^2,\frac{\sigma_0}{\sigma_h}),
\end{equation}
For $\N_q=\N_q(\mu,C\sigma^2)$, we define the functional $J_h(\N_q,\N_0)$ by
\begin{equation}
\label{eq: J_h}
J_h(\N_q|\N_0)\colonequals \inf_{Q\in \mathcal{Q}} H_{m}(Q\big\|Q_{0\Ra h}),
\end{equation}
where
\begin{equation*}
\mathcal{Q}\colonequals \left\{\N_{m}(\mu_0,C\sigma_0^{2},\mu,C\sigma^{2},\theta)\big|~\theta\in\RR\right\}.
\end{equation*}

Then there exist explicit constants $a=a(\sigma_0,q)$ and  $b=b(\sigma_0,q)$, which are given respectively in~\eqref{eq:a} and~\eqref{eq:b}, such that the following statements hold
\begin{enumerate}
\item $a(\sigma_h^2-\sigma_0^2)^\frac{1}{q} J_h(\cdot|\N_0)\overset{\Gamma}{\rightarrow}W_2^2(\cdot,\N_0) \quad\text{as}~ h\rightarrow 0$.
\item $ab(\sigma_h^2-\sigma_0^2)^\frac{1-q}{q}J_h(\cdot|\N_0)-\frac{b}{(\sigma_h^2-\sigma_0^2)}W_2^2(\cdot,\N_0)\overset{\Gamma}{\rightarrow}E_q(\cdot)-E_q(\N_0)\quad\text{as}~ h\rightarrow 0$.
\item If $0<q<1$, then $ab(\sigma_h^2-\sigma_0^2)^\frac{1-q}{q} J_h(\cdot|\N_0)-\frac{1}{2h}W_2^2(\cdot,\N_0)\xrightarrow{\Gamma-\liminf} E_q(\cdot)-E_q(\N_0)\quad\text{as}~ h\rightarrow 0$
\end{enumerate}
\end{theorem}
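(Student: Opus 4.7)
The plan is to exploit the fact that all objects in the theorem live on the two-parameter manifold of $q$-Gaussians $\N_q(\mu,C\sigma^2)$, so the three statements reduce to explicit asymptotic computations in the scalar parameters $(\mu,\sigma)$. The natural topology for Gamma-convergence is the parameter topology on $\R\times(0,\infty)$, with respect to which $W_2^2(\cdot,\N_0)$ and $E_q(\cdot)-E_q(\N_0)$ are continuous and all functionals in play are locally equicontinuous. Consequently, once locally uniform pointwise convergence is established, the recovery sequence may be taken constant and the Gamma-liminf inequality follows from pointwise convergence alone, collapsing the problem to explicit scalar analysis.

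The first step is to compute $J_h(\N_q|\N_0)$ in closed form. The admissible class $\mathcal{Q}$ is the one-parameter family indexed by the correlation $\theta\in\R$; substituting the bivariate $q$-Gaussian density into~\eqref{eq:q-relative entropy} and using that both competitor and reference measure share the prescribed marginals, $H_m(\N_m(\mu_0,C\sigma_0^2,\mu,C\sigma^2,\theta)\|Q_{0\Ra h})$ becomes an algebraic function of $\theta$ and of the two covariance matrices. Differentiating in $\theta$, identifying the unique minimizer $\theta^\ast=\theta^\ast(\mu,\sigma,\mu_0,\sigma_0,h)$ and substituting back, I obtain a closed-form expression $J_h(\N_q|\N_0)=F(\mu,\sigma,\mu_0,\sigma_0;h)$. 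I then use two explicit ingredients recalled in Section~\ref{sec: review}: the $q$-Gaussian family is isometric to the Gaussian family under $W_2$, so $W_2^2(\N_q(\mu,C\sigma^2),\N_0)=K(q)[(\mu-\mu_0)^2+(\sigma-\sigma_0)^2]$ for an explicit constant $K(q)$, and $E_q(\N_q(\mu,C\sigma^2))$ is an explicit function of $\sigma$ alone.

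With $F$ in hand, statements (1) and (2) become scalar identities. Using $\sigma_h^2-\sigma_0^2=\frac{2}{3-q}\sigma_0^{q-1}h+O(h^2)$, I Taylor-expand $F$ simultaneously in $h$ and in the displacement $(\mu-\mu_0,\sigma-\sigma_0)$ at the reference point: the leading term, once multiplied by $a(\sigma_h^2-\sigma_0^2)^{1/q}$, matches $W_2^2$ and pins down the constant $a$ of~\eqref{eq:a}, proving (1); the next-order term, surviving after the subtraction $-\frac{b}{\sigma_h^2-\sigma_0^2}W_2^2$, reproduces $E_q(\cdot)-E_q(\N_0)$ and pins down the constant $b$ of~\eqref{eq:b}, proving (2). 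Because the convergence is locally uniform on the parameter manifold and the limits are continuous, both full Gamma-convergences follow from the pointwise analysis.

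For (3) I rewrite the functional as $\bigl[ab(\sigma_h^2-\sigma_0^2)^{(1-q)/q}J_h(\cdot|\N_0)-\frac{b}{\sigma_h^2-\sigma_0^2}W_2^2(\cdot,\N_0)\bigr]+\bigl(\frac{b}{\sigma_h^2-\sigma_0^2}-\frac{1}{2h}\bigr)W_2^2(\cdot,\N_0)$. By (2) the bracketed part Gamma-converges to the desired limit, so the Gamma-liminf statement reduces to showing that the additional term has non-negative liminf along any converging sequence. The sign of $\frac{b}{\sigma_h^2-\sigma_0^2}-\frac{1}{2h}$ is governed by the concavity of $s\mapsto(s+\sigma_0^{3-q})^{2/(3-q)}$: for $0<q<1$ the exponent $2/(3-q)\in(2/3,1)$ makes this map concave, forcing $\frac{\sigma_h^2-\sigma_0^2}{2h}\leq b$ for small $h$, hence $\frac{b}{\sigma_h^2-\sigma_0^2}\geq\frac{1}{2h}$ and the extra term is non-negative; for $1<q<5/3$ convexity flips the sign, full Gamma-convergence fails, and this is precisely why (3) is stated as a liminf only and only for $0<q<1$. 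The main obstacle is carrying out the $\theta$-minimization and the subsequent expansion of $H_m$ to sufficient order with the nonstandard algebra of $\log_m$, so that the explicit constants match~\eqref{eq:a} and~\eqref{eq:b} exactly; a secondary technical point is ensuring that $\theta^\ast\in(-1,1)$ on the relevant parameter range, so that $\N_m(\mu_0,C\sigma_0^2,\mu,C\sigma^2,\theta^\ast)$ remains a genuine bivariate $q$-Gaussian throughout the minimization.
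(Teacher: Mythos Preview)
Your strategy is essentially the paper's: reduce everything to the two parameters $(\mu,\sigma)$, identify the optimal correlation explicitly, write $J_h$, $W_2^2$, and $E_q$ in closed form, establish locally uniform convergence as $h\to 0$, take constant recovery sequences, and for (3) use exactly the decomposition you describe together with the concavity of $s\mapsto(s+\sigma_0^{3-q})^{2/(3-q)}$ for $0<q<1$. The paper obtains the minimizer via a Pythagorean identity for $H_m$ rather than by differentiating in $\theta$, but the outcome is the same implicit equation for the optimal $\eta_h$.

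One caution, however: your phrase ``Taylor-expand $F$ simultaneously in $h$ and in the displacement $(\mu-\mu_0,\sigma-\sigma_0)$ at the reference point'' would be a genuine gap if carried out literally. Gamma-convergence demands the limit at \emph{every} $(\mu,\sigma)$, not only near $(\mu_0,\sigma_0)$, and a quadratic Taylor approximation in the displacement would not suffice. The paper never expands in the displacement; instead it derives the \emph{exact} identity
\[
a(\sigma_h^2-\sigma_0^2)^{1/q}J_h(\N_q|\N_0)=W_2^2(\N_q,\N_0)+2C\sigma_0\sigma(1-\eta_h)+C(\sigma_h^2-\sigma_0^2)\Bigl[2\log_m\Bigl(\tfrac{\sigma_0}{\sigma\eta_h}\Bigr)^{\frac{1}{3-m}}-1\Bigr],
\]
and then shows the last two terms tend to zero locally uniformly in $(\mu,\sigma)$ as $h\to 0$, using the estimate $1-\eta_h\le Ch/\sigma^{2-q}$. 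All asymptotics are in $h$ alone. A minor slip: the isometry gives $W_2^2(\N_q(\mu,C\sigma^2),\N_0)=(\mu-\mu_0)^2+C(\sigma-\sigma_0)^2$, with the constant $C$ on the variance term only, not a common factor $K(q)$ on both.
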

When $q\rightarrow 1$, then $a\rightarrow 4,~b\rightarrow \frac{1}{2},~\sigma_h^2-\sigma_0^2\rightarrow h$ and we recover~\eqref{eq:1st order Gamma diffusion eqn}-\eqref{eq: 2nd second order Gamma diffusion eqn} for the diffusion equation.

\subsection{Discussion}

Given $\N_q$ and $\N_0$,  both $H_m(\N_q\big\|\N_0)$ and $J_h(\N_q|\N_0)$ are always non-negative and are equal to $0$ if and only if $\N_q=\N_0$. By its definition, $J_h(\N_q|\N_0)$ measures the deviation of $\N_q$ from the solution of the porous medium equation at time $h$ given the initial data $\N_0$. Hence minimizing $J_h$ means to find the best approximation of the solution at a time $h$. As a consequence of the Gamma convergence, minimizers of the functionals converge to minimizer of the Gamma-limit functional. Thus the main theorem explains why the Wasserstein metric is involved and why we should minimize the combination of it with the Tsallis entropy.


For the linear diffusion equation, by Sanov's theorem the relative entropy is the \emph{static} rate functional of the empirical process of many i.i.d particles. While the functional $\widetilde{J}_h$ in~\eqref{eq:J_h diffusion} is the rate functional after time $h$ of the empirical process of many-Brownian motions. Hence it has a clear microscopic interpretation. The $m$-relative entropy $H_m$ and the functional $J_h$ are defined and have similar properties as $H$ and $\widetilde{J}_h$. However, it is unclear whether $J_h$ can be proven to be the rate functional of some microscopic stochastic process. In a recent paper~\cite{BLMV13}, the authors introduce a stochastic particle system, the Ginzburg-Landau dynamics, and show that the porous medium equation is the hydrodynamic limit of the system. Moreover, the $m$-relative entropy is the static rate functional of the invariant measures. It would be interesting to study whether the functional $\widetilde{J}_h$ is actually (or related to) the rate functional at time $h$ of the Ginzburg-Landau dynamics. Another question for future research is to extend the main theorem to a larger class of measures.

\subsection{Organisation of the paper}
The rest of the paper is organized as follows. In section~\ref{sec: review}, we first recall the definition of Gamma convergence and relevant properties of the $q$-Gaussians. Next we compute the functional $J_h$ in section~\ref{sec: minimising relative entropy}. Finally, the proof of the main theorem is given in section~\ref{sec:proof}.
%


\section{Preliminaries}
\label{sec: review}
We first recall the definition of Gamma convergence for the reader's convenience.
\begin {definition}\cite{Bra02}
Let $X$ be a metric space. We say that a sequence $f_n\colon X\rightarrow\overline{\R}~\,\Gamma-$ converges in $X$ to $f\colon X\rightarrow\overline{\R}$, denoted by $f_n\xrightarrow{\Gamma}f$, if for all $x\in X$ we have
\begin{enumerate}
\item (lower bound part) For every sequence $x_n$ converging to $x$
\begin{equation}
\liminf_{n\rightarrow\infty}f_n(x_n)\geq f(x),
\end{equation}
\item(upper bound part) There exists a sequence $x_n$ converging to $x$ such that
\begin{equation}
\lim_{n\rightarrow\infty}f_n(x_n)=f(x).
\end{equation}
\end{enumerate}
\end{definition}
If $f_n$ satisfy the lower (or upper, respectively) bound part then we write $f_n\xrightarrow{\Gamma-\liminf}f$ (or $f_n\xrightarrow{\Gamma-\limsup}f$ respectively).
\subsection{Properties of $q$-Gaussian measures}
The $q$-exponential function and $q$-logarithmic function satisfy the following properties
\begin{equation}
\label{eq:logq-expq}
\exp_q(\log_qx)=\log_q(\exp_qx)=x,
\end{equation}
and
\begin{align}
\label{eq:logq-product}
\log_q(xy)&=\log_qx+\log_qy+(1-q)\log_q(x)\log_qy\nonumber
\\&=\log_qx+x^{1-q}\log_qy
\\&=\log_qy+y^{1-q}\log_qx\nonumber.
\end{align}
The constants $C_0(q,d)$ and $C_1(q,d)$ in~\eqref{eq:qGaussian} are given by
\begin{equation}
\label{eq:C1(q,d)}
C_1(q,d)=\frac{2}{2+(d+2)(1-q)},
\end{equation}
and
\begin{equation}
\label{eq:C0(q,d)}
C_0(q,d)=
\begin{cases} \frac{\Gamma\Kl\frac{2-q}{1-q}+\frac{d}{2}\Kr}{\Gamma\Kl\frac{2-q}{1-q}\Kr} \Kl\frac{(1-q)C_1(q,d)}{2\pi}\Kr^\frac{d}{2}& \text{if $0<q<1$,}
\\ \frac{\Gamma\Kl\frac{1}{q-1}\Kr}{\Gamma\Kl\frac{1}{q-1}-\frac{d}{2}\Kr}\Kl\frac{(q-1)C_1(q,d)}{2\pi}\Kr^\frac{d}{2}& \text{if $1<q<\frac{d+4}{d+2}$.}
\end{cases}
\end{equation}
%
%
%

\subsection{q-Gaussian measures and solutions of the porous medium equation}
The porous medium equation~\eqref{eq:PMEequation} has a self-similar solution, which is called the Barenblatt-Pattle solution, of the form
\begin{align}
\label{eq:stationarysolu}
\rho_q(x,t)&\colonequals \El At^{-d\alpha(1-q)}-B|x|^2t^{-1}\Er_+^\frac{1}{1-q}\nonumber
\\&=\El A-B|x|^2t^{-2\alpha}\Er_+^\frac{1}{1-q}t^{-d\alpha},
\end{align}
where
\begin{equation}
\label{eq:alpha}
\alpha=\alpha(q,d)\colonequals\frac{1}{d(1-q)+2}, B=B(q,d)\colonequals\frac{(1-q)\alpha}{2(2-q)},
\end{equation}
and $A$ is a normalization constant
\begin{equation*}
\int_\RR^d\rho_q(x,t)\L^d(dx)=1.
\end{equation*}
More precise,
\begin{equation}
\label{eq:A}
A\colonequals C_0^{2\alpha(1-q)}\El\frac{\alpha}{(2-q)C_1}\Er^{d\alpha(1-q)}.
\end{equation}
It is straightforward to see that
\begin{equation*}
\rho_q(x,t)=\N_q(0,Ct^{2\alpha}I_d),
\end{equation*}
where
\begin{equation}
\label{eq:C-formula}
C\colonequals\frac{(2-q)C_1}{\alpha}A.
\end{equation}
It is well known that a solution to the diffusion equation is obtained by a convolution of an initial data with the diffusion kernel. Hence if the initial data is a Gaussian measure $\N(v,V)$ then the solution at time $t$ is again a Gaussian $\N(v,V_t)$, which is given by
\begin{equation*}
\N(v,V_t)=\N(v,V)*\N(0,2tI_d)=\N(v,V+2tI_d).
\end{equation*}
In \cite{Tak12,OW10}, the authors show that a similar statement holds for the porous medium equation on the space of $q$-Gaussian measures. That is, a solution to the porous medium equation with an initial data being a $q$-Gaussian measure is again a $q$-Gaussian for all time. Moreover, to find a solution at time $t>0$, it reduces to solving an ordinary differential equation for the covariance matrix. Let $\Theta$ be a map on $\mathrm{Sym}^+(d,\RR)$ defined by
\begin{equation}
\label{eq:mapTheta}
\Theta(V)\colonequals (\det V)^{-\alpha(1-q)}V.
\end{equation}
We note that $\rho_q(x,t)=\N_q(0,C\Theta(tI_d))$.
\begin{proposition}[\cite{Tak12}]
\label{theo:ODE-covriancematrix}
For any $q\in Q_d$ and $V\in \mathrm{Sym}^+(d,\RR)$, we set the time-dependent matrix $V_t$ as
\begin{equation}
\label{eq:ODEforVt}
\Theta(V_t)=\Theta(V)+\sigma(t)I_d, \qquad \frac{d}{dt}\sigma(t)=2\alpha(\det\Theta(V_t))^{-\frac{1-q}{2}}.
\end{equation}
Then $\N_q(v,C\Theta(V_t))$ is a solution to the porous medium equation.
\end{proposition}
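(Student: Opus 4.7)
The plan is to verify the ansatz by recasting the porous medium equation as a continuity equation driven by a pressure gradient, observing that for a $q$-Gaussian the resulting velocity field is \emph{linear in $x$}, and thereby reducing the PDE to a closed ODE on the covariance matrix which must coincide with the one given for $V_t$. Throughout, by the translational invariance of~\eqref{eq:PMEequation} I may set $v=0$; the general case follows from $\N_q(v,W)(x)=\N_q(0,W)(x-v)$. Write $W_t\colonequals C\Theta(V_t)$ and
$$\rho(t,x)\colonequals\N_q(0,W_t)(x)=C_0(\det W_t)^{-1/2}\exp_q\Kl-\tfrac12 C_1\langle x,W_t^{-1}x\rangle\Kr.$$

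Next I would introduce the pressure $p\colonequals\tfrac{2-q}{1-q}\rho^{1-q}$, so that $\nabla\rho^{2-q}=\rho\,\nabla p$ and the PME reads $\partial_t\rho+\div(\rho u_t)=0$ with $u_t=-\nabla p$. The key simplification is the identity $\exp_q(s)^{1-q}=[1+(1-q)s]_+$ (a direct consequence of~\eqref{eq:qexp}), which makes $p$ a quadratic polynomial in $x$,
$$p(t,x)=\frac{2-q}{1-q}C_0^{1-q}(\det W_t)^{-(1-q)/2}\El 1-\tfrac{(1-q)C_1}{2}\langle x,W_t^{-1}x\rangle\Er.$$
Hence the velocity is the linear field $u_t=\Phi(t)x$ with $\Phi(t)\colonequals(2-q)C_1 C_0^{1-q}(\det W_t)^{-(1-q)/2}W_t^{-1}$, symmetric and commuting with $W_t$.

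A linear velocity field preserves the $q$-Gaussian form: a direct change of variables shows that the push-forward of $\N_q(0,W_0)$ by the flow of $\dot X=\Phi(t)X$ is again $\N_q(0,W_t)$ with $W_t=M_t W_0 M_t^T$, where $\dot M_t=\Phi(t)M_t$. Consequently the ansatz satisfies the continuity equation iff
$$\dot W_t=\Phi(t)W_t+W_t\Phi(t)^T=2(2-q)C_1 C_0^{1-q}(\det W_t)^{-(1-q)/2}I_d,\qquad(\star)$$
using the commutation $\Phi(t)W_t=(2-q)C_1 C_0^{1-q}(\det W_t)^{-(1-q)/2}I_d=W_t\Phi(t)^T$. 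On the other hand, the stated relation $\Theta(V_t)=\Theta(V_0)+\sigma(t)I_d$, together with $\det\Theta(V_t)=C^{-d}\det W_t$ and $\dot\sigma=2\alpha(\det\Theta(V_t))^{-(1-q)/2}$, gives
$$\dot W_t=C\dot\sigma(t)I_d=2\alpha\,C^{1+d(1-q)/2}(\det W_t)^{-(1-q)/2}I_d.$$
Matching this with $(\star)$ reduces the proof to the algebraic identity
$$(2-q)C_1 C_0^{1-q}=\alpha\,C^{1+d(1-q)/2},$$
which I would verify by substituting $C=(2-q)C_1 A/\alpha$ from~\eqref{eq:C-formula} and~\eqref{eq:A} for $A$, and using the relation $\alpha(1+d(1-q)/2)=1/2$, valid since $\alpha=1/(d(1-q)+2)$.

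The main obstacles I foresee are twofold. The primary one is the bookkeeping in the final algebraic identity, which entangles all five of $\alpha,C,C_0,C_1,A$. A secondary technical point is the free boundary $\{\rho=0\}$ in the slow-diffusion regime $0<q<1$: the formula $\nabla\rho^{2-q}=\rho\,\nabla p$ is only literally valid on the interior of the support, but since both $\rho$ and $\rho^{2-q}$ vanish continuously at the boundary (the exponent $2-q>1$), the computation remains valid in the standard weak formulation of the PME.
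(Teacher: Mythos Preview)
The paper does not actually prove this proposition: it is quoted from \cite{Tak12} without argument, and the only thing done afterward is to specialize the statement to $d=1$ by solving the scalar ODE $\Theta(V_t)=\Theta(V)+\sigma(t)$ explicitly. So there is no ``paper's proof'' to compare against.

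That said, your direct verification is sound and self-contained. Rewriting the PME as a continuity equation with pressure $p=\tfrac{2-q}{1-q}\rho^{1-q}$, using $\exp_q(s)^{1-q}=[1+(1-q)s]_+$ to see that $p$ is quadratic, and hence that the velocity field is linear in $x$, is exactly the right mechanism. The reduction to $\dot W_t=\Phi(t)W_t+W_t\Phi(t)^T$ via the push-forward of a $q$-Gaussian by a linear flow is correct (the $q$-Gaussian density is a function of the quadratic form, so linear maps act on the covariance as $W\mapsto MWM^T$). The algebraic identity you isolate,
\[
(2-q)C_1C_0^{1-q}=\alpha\,C^{1+d(1-q)/2},
\]
does hold: since $1+\tfrac{d(1-q)}{2}=\tfrac{1}{2\alpha}$, substituting $C=\tfrac{(2-q)C_1}{\alpha}A$ from~\eqref{eq:C-formula} and $A$ from~\eqref{eq:A} makes the combined exponent of $\tfrac{(2-q)C_1}{\alpha}$ equal to $\tfrac{1}{2\alpha}-\tfrac{d(1-q)}{2}=1$, and the $C_0^{1-q}$ factors match. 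Your caveat about the free boundary for $0<q<1$ is appropriate; since $2-q>1$ the flux $\rho^{2-q}$ vanishes to higher order than $\rho$ at $\partial\{\rho>0\}$, so the computation is valid in the usual weak sense. One cosmetic point: the formula~\eqref{eq:qGaussian} in the paper has a typo (it should read $(\det V)^{-1/2}$, consistent with the $1$D and $2$D formulas that follow); you used the correct sign.
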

\begin{remark}
The assertion also holds true for $q=1$.
\end{remark}
We will work out this theorem in $1$D in more detail. We calculate relevant variables.
\begin{equation}
\alpha =\frac{1}{3-q};\quad \Theta(V)=(\det V)^{-\alpha(1-q)}V=V^{1-\alpha(1-q)}=V^\frac{2}{3-q};\quad \det\Theta(V_t)=V_t.
\end{equation}
The ODE becomes
\begin{equation*}
\Theta(V_t)=\Theta(V)+\sigma(t)I_d, \qquad \frac{d}{dt}\sigma(t)=2\alpha(\Theta(V_t))^{-\frac{1-q}{2}}.
\end{equation*}
Solving this ODE we get
\begin{equation*}
\Theta(V_t)=\El\alpha(3-q)t+\Theta(V)^\frac{3-q}{2}\Er^\frac{2}{3-q}=(t+V)^\frac{2}{3-q}.
\end{equation*}
So if $\rho_0(x)=\N_q(v,CV^\frac{2}{3-q})$ then $\rho(t,x)=\N_q(v,C(t+V)^\frac{2}{3-q})$ for all $t>0$.
In other words, if $\rho_0(x)=\N_q(\mu_0,C\sigma_0^2)$ then $\rho(t,x)=\N_q(\mu_0,C(t+\sigma_0^{3-q})^\frac{2}{3-q})$ for all $t>0$.
\subsection{q-Gaussian and the Wasserstein metric}
The $q$-Gaussian measures have another important property stated in the following Proposition.
\begin{proposition}[\cite{Tak12}]
For any $q\in Q_d$, the space of $q$-Gaussian measures is convex and isometric to the space of Gaussian measures with respect to the Wasserstein metric.
\label{theo:qGaussian-Wasserstein}
\end{proposition}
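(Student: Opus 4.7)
My plan is to identify both spaces as parameter spaces $\R^d \times \mathrm{Sym}^+(d,\R)$ via the natural bijection $\Phi : \N(v,V) \mapsto \N_q(v,V)$, and then verify that $\Phi$ pulls back the Wasserstein distance exactly and that the image of a Wasserstein geodesic between two Gaussians is a Wasserstein geodesic between the corresponding $q$-Gaussians. The engine behind both assertions is the \emph{affine covariance} of $q$-Gaussian densities: like Gaussians, they are elliptically contoured, depending on $x$ only through the Mahalanobis form $\langle x - v, V^{-1}(x-v)\rangle$ inside $\exp_q$.

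The first step is a direct change-of-variables lemma: for any $A \in \mathrm{Sym}^+(d,\R)$ and the affine map $T(x) = A(x - v_1) + v_2$, one has $T_{\#}\, \N_q(v_1,V_1) = \N_q(v_2, AV_1 A)$. This reduces to checking that the quadratic form inside $\exp_q$ transforms by $V \mapsto AVA$ while the Jacobian $\det A$ is absorbed by the $(\det V)$-factor in~\eqref{eq:qGaussian}; the constants $C_0(q,d)$ and $C_1(q,d)$ come along unchanged. In particular, the space of $q$-Gaussians is closed under every symmetric positive-definite affine map.

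Next I would exhibit the optimal transport. Choosing the familiar Gaussian Brenier map
\begin{equation*}
A = V_1^{-1/2}\bigl(V_1^{1/2} V_2 V_1^{1/2}\bigr)^{1/2} V_1^{-1/2},
\end{equation*}
the lemma gives $T_{\#}\, \N_q(v_1,V_1) = \N_q(v_2,V_2)$, while symmetry and positivity of $A$ make $T$ the gradient of the strictly convex potential $\psi(x) = \tfrac12\langle x-v_1, A(x-v_1)\rangle + \langle v_2,x\rangle$. Brenier's theorem then identifies $T$ as the unique $W_2$-optimal map. Evaluating $\int |T(x)-x|^2\,d\N_q(v_1,V_1)(x)$ using the explicit second moment of a $q$-Gaussian, which is a $(q,d)$-dependent scalar multiple of $V_1$, yields the same Bures-type expression that appears in the Gaussian case; together with the precise definition of $C_1(q,d)$ in~\eqref{eq:C1(q,d)} this gives $W_2(\N_q(v_1,V_1),\N_q(v_2,V_2)) = W_2(\N(v_1,V_1),\N(v_2,V_2))$, so $\Phi$ is an isometry.

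Finally, convexity of the space of $q$-Gaussians inside $(\P_2(\R^d), W_2)$ follows from McCann's displacement interpolation: the geodesic joining the two endpoints is $\bigl((1-t)\mathrm{Id} + tT\bigr)_{\#}\,\N_q(v_1,V_1)$, and since $(1-t)I + tA$ is symmetric and positive definite for every $t\in[0,1]$, the change-of-variables lemma shows each interpolant is again a $q$-Gaussian with explicit parameters $(v_t, V_t)$. The main technical hurdle I anticipate is the bookkeeping with $C_0(q,d)$ and $C_1(q,d)$: one must confirm that the Mahalanobis-form parameterisation of $q$-Gaussians by $V$ matches the covariance-matrix parameterisation of Gaussians tightly enough that $\Phi$ is a genuine isometry and not merely a similarity, which is what pins down the specific prefactor $C_1(q,d)$ appearing in~\eqref{eq:qGaussian}.
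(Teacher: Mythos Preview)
The paper does not supply its own proof of this proposition: it is quoted directly from~\cite{Tak12} and only the consequence~\eqref{eq:Wassertein1D} is used downstream. So there is nothing in the present paper to compare your argument against.

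That said, your sketch is sound and is essentially the argument in the cited reference. The affine push-forward lemma is correct for elliptically contoured families, the Brenier map you write down is optimal by the convex-potential criterion, and McCann interpolation then gives geodesic convexity of the $q$-Gaussian class. Your one flagged concern is exactly the right one: the map $\Phi:\N(v,V)\mapsto\N_q(v,V)$ is an isometry (and not merely a similarity) precisely because the normalisation constant $C_1(q,d)$ in~\eqref{eq:C1(q,d)} is chosen so that $V$ \emph{is} the covariance matrix of $\N_q(v,V)$; once you verify $\mathrm{Cov}(\N_q(v,V))=V$, the transport-cost computation collapses to the Gaussian Bures formula verbatim.
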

Hence
\begin{equation*}
W_2(\N_q(\mu,\Sigma),\N_q(\nu,V))^2=W_2(\N(\mu,\Sigma),\N(\nu,V))^2=|\mu-\nu|^2+\mathrm{tr}\Sigma+\mathrm{tr}V-2\mathbf{tr}\sqrt{\Sigma^\frac{1}{2}V^\frac{1}{2}}.
\end{equation*}
In particular
\begin{equation}
\label{eq:Wassertein1D}
W_2(\N_q(\mu_1,\sigma_1^2),\N_q(\mu_2,\sigma_2^2))^2=(\mu_1-\mu_2)^2+(\sigma_1-\sigma_2)^2.
\end{equation}

\section{Computing the functional $J_h$}
\label{sec: minimising relative entropy}
In this section, we compute the functional $J_h$ explicitly. The following Proposition is true for any dimensional $d$. For clarity, we use a boldface font for vectors.
\begin{proposition}
\label{theo:qstar-min}
Let $P(d\mathbf{x})=g(\mathbf{x})d\mathbf{x}$ be absolutely continuous  with respect to the Lebesgue measure in $\RR^{d}$. Let $\mathcal{Q}$ be the set of all Borel measures $Q=f(\mathbf{x})d\mathbf{x}$ in $\RR^{d}$ satisfying
\begin{equation}
\label{eq:res}
\int r_{i}(\mathbf{x})\cdot f(\mathbf{x}) d \mathbf{x}
= a_{i} \hh ,\hh i\in \{1,2,...N\},
\end{equation}
where $r_{i}(\mathbf{x})\colon\RR^{d}\rightarrow\RR $ are given functions and   $a_{i}\in\RR$.

Assume that there is a measure $Q^{*}\in\mathcal{Q}$ that has a density satisfying the equation
\begin{equation}
\label{eq:optimal density}
\log_m f^{*}(\mathbf{x})
=\log_m g(\mathbf{x})+ \sum_{i=1}^{N}\lambda_{i}r_{i}(\mathbf{x}),
\end{equation}
for some $\lambda_{i}\in\RR$.  Then there holds
\begin{equation}
\label{eq:additivity of relative entropy}
H_m(Q\big\|P)=H_m(Q^*\big\|P)+H_m(Q\big\|Q^*)\quad \text{for all}~ Q\in \mathcal{Q}.
\end{equation}
and as a consequence, $Q^{*}$ is the unique minimiser  of $H_m(Q\big\|P)$ over all $Q\in\mathcal{Q}$.
\end{proposition}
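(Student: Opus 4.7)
The plan is to prove the Pythagorean-type identity \eqref{eq:additivity of relative entropy} by a direct algebraic expansion, and then extract uniqueness from the non-negativity of $H_m$. The crucial leverage is the Euler--Lagrange characterisation \eqref{eq:optimal density}: it identifies $\log_m f^{*} - \log_m g$ with the linear combination $\sum_{i} \lambda_i r_i$ of the constraint functions, so any quantity that is pointwise nontrivial collapses to zero once integrated against a difference $f - f^{*}$ whose $r_i$-moments are prescribed to the same values $a_i$.

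Concretely, using the first form of the $m$-relative entropy in \eqref{eq:q-relative entropy}, I would write out $H_m(Q^{*}\|P) + H_m(Q\|Q^{*})$ and observe that the cross terms $\pm f^{*}\log_m f^{*}$ cancel, yielding
\begin{equation*}
H_m(Q^{*}\|P) + H_m(Q\|Q^{*}) = \tfrac{1}{2-m}\int\bigl[f\log_m f - g\log_m g - (2-m)\log_m g\,(f^{*}-g) - (2-m)\log_m f^{*}\,(f-f^{*})\bigr]d\mathbf{x}.
\end{equation*}
Subtracting this from the analogous formula for $H_m(Q\|P)$ and regrouping the $\log_m g$- and $\log_m f^{*}$-terms reduces the discrepancy to
\begin{equation*}
H_m(Q\|P) - H_m(Q^{*}\|P) - H_m(Q\|Q^{*}) = \int\bigl(\log_m f^{*} - \log_m g\bigr)\bigl(f-f^{*}\bigr)\,d\mathbf{x}.
\end{equation*}
Inserting \eqref{eq:optimal density} converts the right-hand side into $\sum_{i=1}^{N}\lambda_i\int r_i(f-f^{*})\,d\mathbf{x}$, which vanishes because $Q,Q^{*}\in\mathcal{Q}$ share the same constraint values \eqref{eq:res}. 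This proves \eqref{eq:additivity of relative entropy}; combined with $H_m(Q\|Q^{*})\geq 0$ (with equality iff $Q=Q^{*}$) the identity then gives $H_m(Q\|P)\geq H_m(Q^{*}\|P)$ with equality forcing $Q=Q^{*}$, yielding both minimality and uniqueness.

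The algebra itself is routine; the genuinely delicate ingredient is the non-negativity and strict positivity of $H_m(Q\|Q^{*})$, which I would justify via the Bregman-divergence interpretation of $H_m$ associated with the strictly convex generator $\Phi(t)=t^{2-m}/[(2-m)(1-m)]$ (strict convexity holding in the $m$-range corresponding to $q\in(0,1)\cup(1,(d+4)/(d+2))$). A secondary technical concern is integrability: the three quantities $H_m(Q\|P)$, $H_m(Q^{*}\|P)$, $H_m(Q\|Q^{*})$ should all be finite before the integrals are split and terms cancelled, so one either restricts to $Q$ with $H_m(Q\|P)<\infty$ (the identity being trivial otherwise) or verifies the tail decay of the $q$-Gaussian densities under consideration makes every term absolutely integrable in the applications of Proposition~\ref{theo:qstar-min} that follow in Section~\ref{sec: minimising relative entropy}.
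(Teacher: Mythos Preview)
Your proof is correct and follows essentially the same route as the paper: a direct algebraic expansion of the three $H_m$-terms, using \eqref{eq:optimal density} to reduce the discrepancy to $\sum_i\lambda_i\int r_i(f-f^*)\,d\mathbf{x}$, which vanishes by the moment constraints \eqref{eq:res}. Your organisation (computing $H_m(Q\|P)-H_m(Q^*\|P)-H_m(Q\|Q^*)$ directly rather than splitting $H_m(Q\|P)$ first) is slightly more streamlined, and your explicit remarks on the Bregman-divergence non-negativity and on integrability fill in points the paper leaves implicit.
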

\begin{proof}
We have
\begin{align}
(2-m)H_m(Q\big\|P)&\overset{\eqref{eq:q-relative entropy}}{=}\int f\log_mf+(1-m)g\log_mg-(2-m)f\log_mg\nonumber
\\&=\int f\log_mf+(1-m)f^*\log_mf^*-(2-m)f\log_mf^*\nonumber
\\&\qquad+\int(2-m)f(\log_mf^*-\log_mg)-(1-m)(f^*\log_mf^*-g\log_mg)\nonumber
\\&\overset{\eqref{eq:q-relative entropy}}{=}(2-m)H_m(Q^*\big\|Q)+\int(2-m)f(\log_mf^*-\log_mg)-(1-m)(f^*\log_mf^*-g\log_mg)\label{eq:1}
\end{align}
We rewrite the second line in~ \eqref{eq:1} using~ \eqref{eq:res} and~ \eqref{eq:optimal density}
\begin{align}
&\int (2-m)f(\log_mf^*-\log_mg)-(1-m)(f^*\log_mf^*-g\log_mg)\nonumber
\\&\qquad\overset{\eqref{eq:res}}{=}(2-m)\int f\sum_{i=1}^N\lambda_ir_i-(1-m)\int(f^*\log_mf^*-g\log_mg)\nonumber
\\&\qquad=(2-m)\sum_{i=1}^N\lambda_i\int fr_i-(1-m)\int(f^*\log_mf^*-g\log_mg)\nonumber
\\&\qquad\overset{\eqref{eq:optimal density}}{=}(2-m)\sum_{i=1}^N\lambda_i\int f^*r_i-(1-m)\int(f^*\log_mf^*-g\log_mg)\nonumber
\\&\qquad=(2-m)\int f^*\sum_{i=1}^N\lambda_ir_i-(1-m)\int(f^*\log_mf^*-g\log_mg)\nonumber
\\&\qquad\overset{\eqref{eq:optimal density}}{=}(2-m)\int f^*(\log_mf^*-\log_mg)-(1-m)\int(f^*\log_mf^*-g\log_mg)\nonumber
\\&\qquad=\int f^*\log_mf^*-(2-m)f^*\log_mg+(1-m)g\log_mg\nonumber
\\&\qquad \overset{\eqref{eq:q-relative entropy}}{=}(2-m)H_m(Q^*\big\|P)\label{eq:2}
\end{align}
From~ \eqref{eq:1} and~ \eqref{eq:2} we obain \eqref{eq:additivity of relative entropy}.
\end{proof}
\begin{remark}
The property that the relative entropy and the $m$-relative entropy satisfy a generalized Pythagorean relation is well-known in the literature, see for instance~\cite{Csi75} and~\cite{OW10} for similar relationship.
\end{remark}
We now apply this Proposition to the $q$-bivariate measures.
\begin{proposition}
\label{theo:minimizer} Let $P=\N_{m}(\mu_1,\sigma_1^2,\mu_2,\sigma_2^2,\theta)$ be given. We define
\begin{equation}
\label{eq:q2 Gaussian set}
\mathcal{Q}\colonequals \left\{\N_{m}(\nu_1,\xi_1^2,\nu_2,\xi_2^2,\widetilde{\theta})\big|~ \widetilde{\theta}\in\RR\right\}
\end{equation}
The minimizer of the minimization problem
\begin{equation}
\min_{Q\in \mathcal{Q}} H_{m}(Q\big\|P),
\end{equation}
is given by
\begin{equation}
\label{eq: Q*}
Q^*=\N_{m}(\nu_1,\xi_1^2,\nu_2,\xi_2^2,\eta),
\end{equation}
where
\begin{equation}
\label{eq:theta *}
\frac{\eta}{(1-\eta^2)^\frac{3-m}{2}}=
\frac{\theta}{(1-\theta^2)^\frac{3-m}{2}}
\Kl\frac{\xi_1\xi_2}{\sigma_{1}\sigma_{2}}\Kr^{2-m}.
\end{equation}
\end{proposition}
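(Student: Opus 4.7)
The strategy is to apply Proposition~\ref{theo:qstar-min} with a judiciously chosen set of linear constraints. Every element $\N_m(\nu_1,\xi_1^2,\nu_2,\xi_2^2,\widetilde{\theta}) \in \mathcal{Q}$ has the same mean vector $(\nu_1,\nu_2)$ and the same diagonal variances $\xi_1^2,\xi_2^2$, so $\mathcal{Q}$ is contained in the larger convex set $\mathcal{Q}'$ of probability densities $f$ on $\RR^2$ satisfying
\begin{equation*}
\int f = 1,\quad \int x\,f = \nu_1,\quad \int y\,f = \nu_2,\quad \int x^2\,f = \nu_1^2+\xi_1^2,\quad \int y^2\,f = \nu_2^2+\xi_2^2.
\end{equation*}
These correspond to five constraint functions $r_i(x,y) \in \{1,x,y,x^2,y^2\}$ in the notation of Proposition~\ref{theo:qstar-min}. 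Thus if a candidate $Q^* \in \mathcal{Q}$ satisfies the Euler--Lagrange identity~\eqref{eq:optimal density} with this choice of $r_i$, the proposition will yield that $Q^*$ is the unique minimizer of $H_m(\cdot\|P)$ over $\mathcal{Q}'$, and a fortiori over $\mathcal{Q}$.

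The key computation is $\log_m$ of a $q$-bivariate Gaussian density. Writing such a density as $A \cdot \exp_m\bigl(-\tfrac{1}{2}C_1(m,2)(1-\widetilde{\theta}^2)^{-1}R_{\widetilde{\theta}}(x,y)\bigr)$, with $A = C_0(m,2)/(\xi_1\xi_2\sqrt{1-\widetilde{\theta}^2})$ and $R_{\widetilde{\theta}}$ the quadratic form in~\eqref{eq:qGaussian}, the product rule~\eqref{eq:logq-product} combined with $\log_m\circ\exp_m = \mathrm{id}$ (cf.~\eqref{eq:logq-expq}) produces
\begin{equation*}
\log_m f(x,y) = \log_m A - \tfrac{1}{2}C_1(m,2)\,A^{1-m}(1-\widetilde{\theta}^2)^{-1}R_{\widetilde{\theta}}(x,y),
\end{equation*}
a polynomial of degree two in $(x,y)$. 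Applying this to both $f^* = \N_m(\nu_1,\xi_1^2,\nu_2,\xi_2^2,\eta)$ and $g = \N_m(\mu_1,\sigma_1^2,\mu_2,\sigma_2^2,\theta)$, the difference $\log_m f^* - \log_m g$ is again a quadratic polynomial in $(x,y)$. The requirement \eqref{eq:optimal density} therefore collapses to the single condition that the coefficient of the cross-term $xy$ vanishes, while the four remaining coefficients (of $x^2,y^2,x,y$ and the constant) are free to serve as the Lagrange multipliers $\lambda_i$.

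Matching the $xy$-coefficient produces one scalar equation in $\eta$. Substituting $A^{1-m} \propto (\xi_1\xi_2)^{-(1-m)}(1-\eta^2)^{-(1-m)/2}$ for $f^*$ (and analogously for $g$), then gathering the powers of $1-\eta^2$ and of $\xi_1\xi_2$, this equation simplifies to
\begin{equation*}
\frac{\eta}{(1-\eta^2)^{(3-m)/2}} = \frac{\theta}{(1-\theta^2)^{(3-m)/2}}\left(\frac{\xi_1\xi_2}{\sigma_1\sigma_2}\right)^{2-m},
\end{equation*}
which is precisely~\eqref{eq:theta *}. By construction $Q^* \in \mathcal{Q}$, and its marginal means and variances coincide with those defining $\mathcal{Q}'$ because, as recorded in the introduction after~\eqref{eq:qGaussian}, the parameters of a $q$-bivariate Gaussian agree with its actual mean and covariance matrix. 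Hence $Q^* \in \mathcal{Q}'$ as well, and Proposition~\ref{theo:qstar-min} yields both the optimality and the uniqueness of $Q^*$.

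The only mildly delicate step is the algebraic extraction of the $xy$-coefficient and the subsequent simplification of the exponents of $1-\eta^2$; once the product identity~\eqref{eq:logq-product} has been invoked and the reduction to a single scalar equation achieved, no structural obstacle remains.
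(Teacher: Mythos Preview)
Your proposal is correct and follows essentially the same route as the paper: enlarge $\mathcal{Q}$ to the set $\mathcal{Q}'$ (the paper calls it $\widetilde{\mathcal{Q}}$) of densities with prescribed zeroth, first, and diagonal second moments, apply Proposition~\ref{theo:qstar-min} with constraint functions $1,x,y,x^2,y^2$, and read off the single scalar condition on $\eta$ by matching the $xy$-coefficients of $\log_m f^*$ and $\log_m g$. Your exposition is in fact slightly more detailed than the paper's, which simply writes down the matched $xy$-coefficients without spelling out the intermediate use of~\eqref{eq:logq-product}.
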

\begin{proof}
We actually prove a stronger statement, namely that $Q^{*}$ is the   minimiser of $H(\cdot\|P)$ over $\mathcal{\widetilde{Q}}$, which is defined as  the set of all $Q(d\mathbf{x})=f(\mathbf{x})d\mathbf{x}$ satisfying
  \begin{align*}
    \int{f(x,y)dxdy}&=1,\hh
    & &
    \\ \int{xf(x,y)dxdy}&=\nu_1,\hh                              &\int{yf(x,y)dxdy}&=\nu_2,
\\ \int{x^{2}f(x,y)dxdy}&=\xi_1^2+\nu_1^2,\hh   &\int{y^{2}f(x,y)dxdy}&=\xi_2^2+\nu_2^2.
\end{align*}
Since $Q^*\in\mathcal{Q}\subset\mathcal{\widetilde{Q}}$, $Q^*$ is also the unique minimizer on $\mathcal{Q}$.
Let $g(\mathbf{x})$ and $f^*(\mathbf{x})$ respectively denote the densities of $P$ and $Q^*$. By Proposition~\ref{theo:qstar-min}, $f^*$ satisfies the following equation
\begin{equation}
\label{eq:to get f*}
\log_m f^{*}(\mathbf{x})
=\log_m g(\mathbf{x})+ \sum_{i=1}^{5}\lambda_{i}r_{i}(\mathbf{x}),
\end{equation}
where $r_1=1, r_2=x, r_3=y, r_4=x^2, r_5=y^2$. To get the equation \eqref{eq:theta *}, we simply equalise the coefficients of $xy$ in two sides of~ \eqref{eq:to get f*} and obtain
\begin{equation*}
 \frac{\theta}{\sigma_1\sigma_2(1-\theta^2)}\El\frac {C_0(m,d)}{\sigma_1\sigma_2\sqrt{1-\theta^2}}\Er^{1-m}= \frac{\eta}{\xi_1\xi_2(1-\eta^2)}\El\frac {C_0(m,d)}{\sigma_1\sigma_2\sqrt{1-\eta^2}}\Er^{1-m},
\end{equation*}
which is equivalent to
\begin{equation*}
\frac{\eta}{(1-\eta^2)^\frac{3-m}{2}}=
\frac{\theta}{(1-\theta^2)^\frac{3-m}{2}}
\Kl\frac{\xi_1\xi_2}{\sigma_{1}\sigma_{2}}\Kr^{2-m}.
\end{equation*}
\end{proof}

We now compute the $m$-entropy of a $m$-Gaussian and  the $m$-relative entropy between two $m$-Gaussians.

\begin{proposition}
\label{theo:Relative entropy Computation}
We have
\begin{equation}
E_m(\N_m(\boldsymbol{\mu},\Sigma))-E_m(\N_m(\boldsymbol{\mu},V))=
(2-m)C_1(m,d)\Kl\frac{C_0(m,d)}{(\det V)^\frac{1}{2}}\Kr^{1-m}\log_m\frac{(\det V)^\frac{1}{2}}{(\det \Sigma)^\frac{1}{2}},
\end{equation}
and
\begin{equation}
\begin{split}
H_m(\N_m(\boldsymbol{\mu},\Sigma),\N_m(\boldsymbol{\nu},V))=
\frac{1}{2}C_1(m,d)\Kl\frac{C_0(m,d)}{(\det V)^\frac{1}{2}}\Kr^{1-m}
&\left[ \tr\Kl V^{-1}\Sigma\Kr+ \right.\\
&\left. \left\langle\boldsymbol{\mu}-\boldsymbol{\nu},
V^{-1} (\boldsymbol{\mu}-\boldsymbol{\nu})\right\rangle+2\log_{m}\frac{(\det\Sigma)^\frac{1}{2}}{(\det V)^\frac{1}{2}}-d\right]
\end{split}
\end{equation}
\end{proposition}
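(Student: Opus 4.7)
The plan is to apply the product rule for the $m$-logarithm~\eqref{eq:logq-product} to linearize $\log_m f$ as an affine function of a quadratic form, so that every integral appearing in~\eqref{eq:q-relative entropy} becomes a classical second-moment integral against an $m$-Gaussian. This parallels the standard derivation of the Kullback--Leibler divergence between two Gaussians.

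To set up the calculation, I write the densities of $f=\N_m(\boldsymbol{\mu},\Sigma)$ and $g=\N_m(\boldsymbol{\nu},V)$ in the factored form
\[
f(\mathbf{x})=A_\Sigma \exp_m\!\Kl-\tfrac{1}{2}C_1 Q_\Sigma(\mathbf{x})\Kr,\qquad g(\mathbf{x})=A_V\exp_m\!\Kl-\tfrac{1}{2}C_1 Q_V(\mathbf{x})\Kr,
\]
with $A_\Sigma=C_0(m,d)/(\det\Sigma)^{1/2}$, $A_V=C_0(m,d)/(\det V)^{1/2}$, and $Q_\Sigma(\mathbf{x})=\langle \mathbf{x}-\boldsymbol{\mu},\Sigma^{-1}(\mathbf{x}-\boldsymbol{\mu})\rangle$, $Q_V(\mathbf{x})=\langle \mathbf{x}-\boldsymbol{\nu},V^{-1}(\mathbf{x}-\boldsymbol{\nu})\rangle$. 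Combining~\eqref{eq:logq-product} with $x=A_\Sigma$, $y=\exp_m[-\tfrac{1}{2}C_1 Q_\Sigma]$, and using~\eqref{eq:logq-expq}, I obtain
\[
\log_m f(\mathbf{x})=\log_m A_\Sigma - \tfrac{1}{2}C_1 A_\Sigma^{1-m}Q_\Sigma(\mathbf{x}),
\]
and similarly for $\log_m g$. Hence each of the three integrals $\int f\log_m f$, $\int g\log_m g$, $\int f\log_m g$ reduces to a constant plus a constant multiple of one of $\int f Q_\Sigma\,d\mathbf{x}$, $\int g Q_V\,d\mathbf{x}$, or $\int fQ_V\,d\mathbf{x}$.

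The key moment identity, which I would verify separately, is that the parameter $\Sigma$ in $\N_m(\boldsymbol{\mu},\Sigma)$ is the actual covariance of the measure, so $\int f Q_\Sigma\,d\mathbf{x}=\int g Q_V\,d\mathbf{x}=d$. The cross moment is evaluated by expanding $\mathbf{x}-\boldsymbol{\nu}=(\mathbf{x}-\boldsymbol{\mu})+(\boldsymbol{\mu}-\boldsymbol{\nu})$ and using $\int \mathbf{x} f\,d\mathbf{x}=\boldsymbol{\mu}$, giving $\int fQ_V\,d\mathbf{x}=\tr(V^{-1}\Sigma)+\langle \boldsymbol{\mu}-\boldsymbol{\nu},V^{-1}(\boldsymbol{\mu}-\boldsymbol{\nu})\rangle$. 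With these three values in hand, I substitute into the definitions of $E_m$ and $H_m$ (see~\eqref{eq:q-relative entropy}), collect terms, and simplify using the elementary identity $\log_m a-\log_m b=(a^{1-m}-b^{1-m})/(1-m)$ together with the algebraic reformulation of $C_1(m,d)=2/[2+(d+2)(1-m)]$ as $\tfrac{1}{1-m}-\tfrac{dC_1}{2}=\tfrac{(2-m)C_1}{1-m}$. For the entropy difference this identity directly produces the factor $(2-m)C_1 A_V^{1-m}$ in front of $\log_m$ of the ratio of determinants; for $H_m$ the trace and translation terms come out of $-(2-m)\int f\log_m g$, while the remaining normalization contributions collapse, via the same identity, into the stated $\log_m$-of-determinant-ratio term.

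The main obstacle is the second-moment identity $\int f Q_\Sigma\,d\mathbf{x}=d$, since it hinges on the precise choice of the normalizing constants $C_0$ and $C_1$. After the change of variables $\mathbf{y}=\Sigma^{-1/2}(\mathbf{x}-\boldsymbol{\mu})$, it reduces to a radial integral of $|\mathbf{y}|^2$ against the unnormalized profile $\exp_m[-\tfrac{1}{2}C_1|\mathbf{y}|^2]$, which can be evaluated using Beta-function identities; the compactly supported regime $0<m<1$ and the heavy-tailed regime $1<m<(d+4)/(d+2)$ must be handled separately. Once this moment identity is in hand, the remainder is a routine bookkeeping exercise.
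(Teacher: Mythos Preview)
Your proposal is correct and follows essentially the same route as the paper. The paper also reduces the three integrals $\int f\log_m f$, $\int g\log_m g$, $\int f\log_m g$ via the product rule~\eqref{eq:logq-product} and the moment identities $\int f\,Q_\Sigma=d$, $\int f\,Q_V=\tr(V^{-1}\Sigma)+\langle\boldsymbol{\mu}-\boldsymbol{\nu},V^{-1}(\boldsymbol{\mu}-\boldsymbol{\nu})\rangle$ (the paper uses these implicitly when writing, e.g., $\int f\log_m f=\log_m\!\big[A_\Sigma\exp_m(-\tfrac{d}{2}C_1)\big]$), and then simplifies with the same algebraic identity $1-(1-m)\tfrac{d}{2}C_1=(2-m)C_1$ that you record in the equivalent form $\tfrac{1}{1-m}-\tfrac{dC_1}{2}=\tfrac{(2-m)C_1}{1-m}$.
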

\begin{remark}
When $m=1$, we recover the corresponding formula for the Gaussian measures.
\end{remark}
\begin{proof}
We denote by $f(\mathbf{x})$ and $g(\mathbf{x})$ respectively the densities of $\N_m(\mathbf{\mu},\Sigma)$ and $\N_m(\mathbf{\nu},V)$. Using the explicit formula of $f$ and $g$ as in~\eqref{eq:qGaussian} and by a straightforward calculation we get
\begin{align}
\int f\log_m f d\mathbf{x}&=\log_m\El\frac{C_0(m,d)}{(\det \Sigma)^\frac{1}{2}}\exp_m\Kl-\frac{d}{2}C_1(m,d)\Kr\Er\nonumber
\\&\overset{\eqref{eq:logq-product}}{=}\Kl-\frac{d}{2}C_1(m,d)\Kr + \log_m\frac{C_0(m,d)}{(\det \Sigma)^\frac{1}{2}} +(1-m)\Kl-\frac{d}{2}C_1(m,d)\Kr\log_m\frac{C_0(m,d)}{(\det \Sigma)^\frac{1}{2}}
\\\int g\log_m g d\mathbf{x}&=\log_m\El\frac{C_0(m,d)}{(\det V)^\frac{1}{2}}\exp_m\Kl-\frac{d}{2}C_1(m,d)\Kr\Er\nonumber
\\&\overset{\eqref{eq:logq-product}}{=}\Kl-\frac{d}{2}C_1(m,d)\Kr + \log_m\frac{C_0(m,d)}{(\det V)^\frac{1}{2}} +(1-m)\Kl-\frac{d}{2}C_1(m,d)\Kr\log_m\frac{C_0(m,d)}{(\det V)^\frac{1}{2}}
\\\int f\log_m g d\mathbf{x}&=\log_m\El\frac{C_0(m,d)}{(\det V)^\frac{1}{2}}\exp_m\Kl-\frac{1}{2}C_1(m,d)(\tr (V^{-1}\Sigma)+\langle\mathbf{\mu}-\mathbf{\nu},V^{-1}(\mathbf{\mu}-\mathbf{\nu}))\rangle\Kr\Er.
\\&\overset{\eqref{eq:logq-product}}{=}\Kl-\frac{1}{2}C_1(m,d)(\tr (V^{-1}\Sigma)+\langle\mathbf{\mu}-\mathbf{\nu},V^{-1}(\mathbf{\mu}-\mathbf{\nu})\rangle\Kr+\log_m\frac{C_0(m,d)}{(\det V)^\frac{1}{2}}
\\&\qquad\qquad +(1-m)\Kl-\frac{1}{2}C_1(m,d)(\tr (V^{-1}\Sigma)+\langle\mathbf{\mu}-\mathbf{\nu},V^{-1}(\mathbf{\mu}-\mathbf{\nu})\rangle)\Kr\log_m\frac{C_0(m,d)}{(\det V)^\frac{1}{2}}
\end{align}
Hence
\begin{align}
\int \Kl f\log_m f-g\log_m g\Kr d\mathbf{x}&=\Kl1-(1-m)\frac{d}{2}C_1(m,d)\Kr\El\log_m\frac{C_0(m,d)}{(\det \Sigma)^\frac{1}{2}}-\log_m\frac{C_0(m,d)}{(\det V)^\frac{1}{2}}\Er\nonumber
\\&=\Kl1-(1-m)\frac{d}{2}C_1(m,d)\Kr\Kl\frac{C_0(m,d)}{(\det V)^\frac{1}{2}}\Kr^{1-m}\log_m\frac{(\det V)^\frac{1}{2}}{(\det \Sigma)^\frac{1}{2}}.
\end{align}
and
\begin{align}
\int (g-f)\log_m gd\mathbf{x}&=\frac{1}{2}C_1(m,d)\Kl \tr (V^{-1}\Sigma)+\langle\mathbf{\mu}-\mathbf{\nu},V^{-1}(\mathbf{\mu}-\mathbf{\nu})\rangle-d\Kr\El1+(1-m)\log_m\frac{C_0(m,d)}{(\det V)^\frac{1}{2}}\Er
\\&=\frac{1}{2}C_1(m,d)\Kl \tr (V^{-1}\Sigma)+\langle\mathbf{\mu}-\mathbf{\nu},V^{-1}(\mathbf{\mu}-\mathbf{\nu})\rangle-d\Kr\El\frac{C_0(m,d)}{(\det V)^\frac{1}{2}}\Er^{1-m}
\end{align}
Since
\begin{equation*}
1-(1-m)\frac{d}{2}C_1(m,d)=(2-m)C_1(m,d),
\end{equation*}
We get
\begin{align}
E_m(\N_m(\boldsymbol{\mu},\Sigma))-E_m(\N_m(\boldsymbol{\mu},V))&=\int \Kl f\log_mf-g\log_mg\Kr d\mathbf{x}\nonumber
\\&=(2-m)\Kl\frac{C_0(m,d)}{(\det V)^\frac{1}{2}}\Kr^{1-m}\log_m\frac{(\det V)^\frac{1}{2}}{(\det \Sigma)^\frac{1}{2}},
\end{align}
and
\begin{align}
H_m(\N_m(\boldsymbol{\mu},\Sigma),\N_m(\boldsymbol{\nu},V))&\overset{\eqref{eq:logq-product}}{=}\frac{1}{2-m}\int \El f\log_mf-g\log_m g-(2-m)\log_m g(f-g)\Er dx\nonumber
\\&=\frac{1}{2}C_1(m,d)\El\frac{C_0(m,d)}{(\det V)^\frac{1}{2}}\Er^{1-m}\El \tr (V^{-1}\Sigma)+\langle\mathbf{\mu}-\mathbf{\nu},V^{-1}(\mathbf{\mu}-\mathbf{\nu})\rangle+2\log_m\frac{(\det V)^\frac{1}{2}}{(\det \Sigma)^\frac{1}{2}}-d\Er.
\end{align}
\end{proof}

\begin{proposition}
\label{theo:optimal relative entropy used for main theorem}
Let $\N_q(\mu_0,C\sigma_0^2), \N_q(\mu,C\sigma^2)$ be given. Set
$\sigma_h^2=(h+\sigma_0^{3-q})^\frac{2}{3-q}$ and let $\N_q(\mu_0,C\sigma_h^2)$ be the solution at time $h$. Let
\begin{equation}
\label{eq:Q_0->h}
Q_{0\Ra h}=\N_{m}(\mu_0,C\sigma_0^2,\mu_0,C\sigma_h^2,\theta_h), \quad \text{where }~ \theta_h=\frac{\sigma_0}{\sigma_h},
\end{equation}
be the $m$-bivariate with mean vector
\begin{equation}
\boldsymbol\mu_{0\Ra h}=\begin{pmatrix} \mu_0\\\mu_0\end{pmatrix},\quad
\Sigma_{0\Ra h}=\begin{pmatrix}
C\sigma_0^2 & C\theta_h\sigma_0\sigma_h\\
C\theta_h\sigma_0\sigma_h & C\sigma_h^2
\end{pmatrix}
=
\begin{pmatrix}
C\sigma_0^2 & C\sigma_0^2\\
C\sigma_0^2& C\sigma_h^2
\end{pmatrix}.
\end{equation}
Define
\begin{equation}
\mathcal{Q}=\left\{\N_{m}(\mu_0,C\sigma_0^2,\mu,C\sigma^2,\theta)\big|\theta\in \RR\right\}
\end{equation}
and
\begin{equation}
Q^*=\mathrm{argmin}_{Q\in\mathcal{Q}}H_{m}(Q\big\|Q_{0\Ra h})
\end{equation}
Then
\begin{equation}
Q^*=\N_{m}(\mu_0,C\sigma_0^2,\mu,C\sigma^2,\eta_h),
\end{equation}
which is a $m$-bivariate with the mean vector and covariance matrix
\begin{equation}
\boldsymbol\mu^*=\begin{pmatrix} \mu_0\\\mu\end{pmatrix},\quad
\Sigma^*=\begin{pmatrix}
C\sigma_0^2 & C\eta_h\sigma_0\sigma\\
C\eta_h\sigma_0\sigma & C\sigma^2
\end{pmatrix},
\end{equation}
where $\eta_h$ satisfies the equation
\begin{equation}
\label{eq:real theta*}
\frac{\eta_h}{(1-\eta_h^2)^\frac{3-m}{2}}=
\frac{\theta_h}{(1-\theta_h^2)^\frac{3-m}{2}}
\Kl\frac{\sigma}{\sigma_h}\Kr^{2-m}.
\end{equation}
Moreover,
the $m$-relative entropy $H_{m}(Q^*\big\|Q_{0\Ra h})$ is
\begin{align}
\label{eq:relative entropy}
&H_{m}(Q^*\big\|Q_{0\Ra h})\nonumber
\\&=\quad\frac{1}{2}C_1(m,2)\Kl\frac{C_0(m,2)}{C\sigma_0(\sigma_h^2-\sigma_0^2)^\frac{1}{2}}\Kr^{1-m}
\El\frac{(\sigma-\sigma_0)^2}{\sigma_h^2-\sigma_0^2}+\frac{2\sigma_0\sigma(1-\eta_h)}{\sigma_h^2-\sigma_0^2}+\frac{(\mu-\mu_0)^2}{C(\sigma_h^2-\sigma_0^2)}+2\log_m\Kl\frac{\sigma_0}{\sigma\eta_h}\Kr^\frac{1}{3-m}-1\Er
\end{align}
The difference $E_q(\N_q(\mu,C\sigma^2))-E_q(\N_q(\mu_0,C\sigma_0^2))$ is
\begin{equation}
\label{eq: difference entropy}
E_q(\N_q(\mu,C\sigma^2))-E_q(\N_q(\mu_0,C\sigma_0^2))=(2-q)C_1(q,1)\Kl \frac{C_0(q,1)}{\sigma_0\sqrt{C}}\Kr^{1-q}\log_q\Kl\frac{\sigma_0}{\sigma}\Kr,
\end{equation}
and the Wasserstein distance $W_2^2(\N_q(\mu,C\sigma^2),\N_q(\mu_0,C\sigma_0^2))$ is
\begin{equation}
\label{eq:Wasserstein distance}
W_2(\N_q(\mu,C\sigma^2),\N_q(\mu_0,C\sigma_0^2))^2=C(\sigma-\sigma_0)^2+(\mu-\mu_0)^2.
\end{equation}
\begin{proof}
By Proposition~\ref{theo:Relative entropy Computation},
\begin{align*}
H_m(Q^*\big\|Q_{0\Ra h})
&=\frac{1}{2}C_1(m,2)\El\frac{C_0(m,2)}{(\det \Sigma_{0\rightarrow h})^\frac{1}{2}}\Er^{1-m}\cdot
\\&\qquad \El \tr (\Sigma_{0\rightarrow h}^{-1}\Sigma^*)+\langle\mathbf{\mu}^*-\mathbf{\mu}_{0\rightarrow h},\Sigma_{0\rightarrow h}^{-1}(\mathbf{\mu}^*-\mathbf{\mu}_{0\rightarrow h})\rangle+2\log_m\frac{(\det \Sigma_{0\rightarrow h})^\frac{1}{2}}{(\det \Sigma^*)^\frac{1}{2}}-2\Er.
\end{align*}

We now calculate each term in the above formula explicitly.

\begin{align*}
&\det \Sigma_{0\rightarrow h}=C^2\sigma_0^2(\sigma_h^2-\sigma_0^2)=C^2\sigma_0^2\sigma_h^2(1-\theta_h^2),
\quad \Sigma_{0\rightarrow h}^{-1}=\frac{1}{C(\sigma_h^2-\sigma_0^2)}
\begin{pmatrix}
\frac{1}{\theta_h^2}& -1\\
-1&1
\end{pmatrix}.
\\&
\det \Sigma^*=C^2\sigma_0^2\sigma^2(1-\eta^2).
\\&
\tr\Kl\Sigma_{0\rightarrow h}^{-1}\Sigma^*\Kr
=\frac{\sigma_h^2+\sigma_0^2-2\eta\sigma_0\sigma}{\sigma_h^2-\sigma_0^2}
=\frac{(\sigma-\sigma_0)^2}{\sigma_h^2-\sigma_0^2}+\frac{2\sigma_0\sigma(1-\eta)}{\sigma_h^2-\sigma_0^2}+1.
\\&
(\mathbf{\mu}^*-\mathbf{\mu}_{0\rightarrow h})^T\Sigma_{0\rightarrow h}^{-1}(\mathbf{\mu}^*-\mathbf{\mu}_{0\rightarrow h})=\frac{(\mu-\mu_0)^2}{C(\sigma_h^2-\sigma_0^2)}.
\\&
\frac{\det\Sigma_{0\rightarrow h}}{\det\Sigma^*}=\frac{\sigma_h^2}{\sigma^2}\cdot\frac{1-\theta_h^2}{1-\eta^2}.
\end{align*}

By~\eqref{eq:real theta*}
\begin{equation*}
\frac{\eta_h}{(1-\eta_h^2)^\frac{3-m}{2}}=
\frac{\theta_h}{(1-\theta_h^2)^\frac{3-m}{2}}
\Kl\frac{\sigma}{\sigma_h}\Kr^{2-m}.
\end{equation*}
Hence
\[
\frac{1-\theta_h^2}{1-\eta_h^2}=\Kl\frac{\theta_h}{\eta_h}\Kr^\frac{2}{3-m}\Kl\frac{\sigma}{\sigma_h}\Kr^\frac{2(2-m)}{3-m},
\]

and
\[
\frac{\sigma_h^2}{\sigma^2}\cdot\frac{1-\theta_h^2}{1-\eta_h^2}=\Kl\frac{\sigma_h\theta_h}{\sigma\eta_h}\Kr^\frac{2}{3-m}=\Kl\frac{\sigma_0}{\sigma\eta_h}\Kr^\frac{2}{3-m}.
\]

Therefore
\[
\frac{(\det\Sigma_{0\rightarrow h})^\frac{1}{2}}{(\det\Sigma^*)^\frac{1}{2}}=\Kl\frac{\sigma_0}{\sigma\eta_h}\Kr^\frac{1}{3-m},
\]

and~\eqref{eq:relative entropy} follows.

\eqref{eq: difference entropy} is a direct consequence of the first equality in Proposition~\ref{theo:Relative entropy Computation} and ~\eqref{eq:Wasserstein distance} has been shown in~\eqref{eq:Wassertein1D}.
\end{proof}
\end{proposition}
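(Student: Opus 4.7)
The plan is to assemble the four assertions of the proposition from the three computational tools developed earlier in this section, namely Propositions~\ref{theo:minimizer} and~\ref{theo:Relative entropy Computation}, together with the Wasserstein isometry of Proposition~\ref{theo:qGaussian-Wasserstein}. First I would invoke Proposition~\ref{theo:minimizer} with $P = Q_{0\Ra h} = \N_{m}(\mu_0, C\sigma_0^2, \mu_0, C\sigma_h^2, \theta_h)$ and $\mathcal{Q}$ the class of $m$-bivariate Gaussians with the prescribed marginals $\N_m(\mu_0, C\sigma_0^2)$ and $\N_m(\mu, C\sigma^2)$. This immediately yields $Q^* = \N_{m}(\mu_0, C\sigma_0^2, \mu, C\sigma^2, \eta_h)$, where the coupling parameter $\eta_h$ is determined by~\eqref{eq:real theta*} after substituting $(\sigma_1, \sigma_2) = (\sigma_0, \sigma_h)$ and $(\xi_1, \xi_2) = (\sigma_0, \sigma)$ into~\eqref{eq:theta *}.

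Next, for the formula~\eqref{eq:relative entropy} I would apply Proposition~\ref{theo:Relative entropy Computation} in dimension $d = 2$ with $\Sigma = \Sigma^*$ and $V = \Sigma_{0\Ra h}$. This reduces the computation to assembling three ingredients: the determinant $\det \Sigma_{0\Ra h}$ together with the inverse $\Sigma_{0\Ra h}^{-1}$, the trace $\tr(\Sigma_{0\Ra h}^{-1} \Sigma^*)$, and the quadratic form $\langle \boldsymbol{\mu}^* - \boldsymbol{\mu}_{0\Ra h}, \Sigma_{0\Ra h}^{-1}(\boldsymbol{\mu}^* - \boldsymbol{\mu}_{0\Ra h})\rangle$, plus the determinant ratio $\det \Sigma_{0\Ra h}/\det \Sigma^*$. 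Because $\Sigma_{0\Ra h}$ is a $2\times 2$ matrix whose entries reduce to multiples of $\sigma_0^2$ and $\sigma_h^2$ (using $\theta_h = \sigma_0/\sigma_h$), its inverse and determinant are direct to write down, and the trace and quadratic form then unfold into the combinations of $\sigma_0, \sigma, \sigma_h, \eta_h, \mu_0, \mu$ appearing inside the square brackets of~\eqref{eq:relative entropy}; in particular the trace produces the decomposition $\tfrac{(\sigma-\sigma_0)^2}{\sigma_h^2-\sigma_0^2} + \tfrac{2\sigma_0\sigma(1-\eta_h)}{\sigma_h^2-\sigma_0^2} + 1$, and the constant $1$ is what gets absorbed into the trailing $-1$.

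The one genuine subtlety — and what I expect to be the main obstacle — is reducing the $\log_m$ contribution $\sqrt{\det \Sigma_{0\Ra h}/\det \Sigma^*} = (\sigma_h/\sigma)\sqrt{(1-\theta_h^2)/(1-\eta_h^2)}$ to the compact form $(\sigma_0/(\sigma \eta_h))^{1/(3-m)}$ that appears inside the $\log_m$ in~\eqref{eq:relative entropy}. Here one uses the implicit defining relation~\eqref{eq:real theta*} to solve for $(1-\theta_h^2)/(1-\eta_h^2) = (\theta_h/\eta_h)^{2/(3-m)}(\sigma/\sigma_h)^{2(2-m)/(3-m)}$; multiplying through by $(\sigma_h/\sigma)^2$ and then invoking the identity $\sigma_h \theta_h = \sigma_0$ collapses everything into $(\sigma_0/(\sigma \eta_h))^{2/(3-m)}$, as required. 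Substituting back into the template from Proposition~\ref{theo:Relative entropy Computation} then yields~\eqref{eq:relative entropy}.

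Finally, the remaining two identities are essentially bookkeeping: formula~\eqref{eq: difference entropy} follows by specialising the first displayed identity in Proposition~\ref{theo:Relative entropy Computation} to one-dimensional $q$-Gaussians with determinants $C\sigma_0^2$ and $C\sigma^2$ and using the definition of $\log_q$; and~\eqref{eq:Wasserstein distance} is a direct consequence of~\eqref{eq:Wassertein1D} combined with the isometry of Proposition~\ref{theo:qGaussian-Wasserstein}, with the common scale $C$ factored out of both variances to produce the $C(\sigma-\sigma_0)^2$ term.
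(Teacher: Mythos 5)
Your proposal matches the paper's proof in strategy and detail: both apply Proposition~\ref{theo:Relative entropy Computation} in $d=2$ to express $H_m(Q^*\|Q_{0\Ra h})$ as a combination of $\tr(\Sigma_{0\Ra h}^{-1}\Sigma^*)$, the quadratic form in the means, and the log of the determinant ratio, and both use the defining relation~\eqref{eq:real theta*} together with $\sigma_h\theta_h=\sigma_0$ to collapse $\det\Sigma_{0\Ra h}/\det\Sigma^*$ to $(\sigma_0/(\sigma\eta_h))^{2/(3-m)}$, then handle~\eqref{eq: difference entropy} and~\eqref{eq:Wasserstein distance} as direct corollaries of the earlier formulas. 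The one minor difference is that you explicitly invoke Proposition~\ref{theo:minimizer} (with the $C$-factors cancelling in $\xi_1\xi_2/(\sigma_1\sigma_2)=\sigma/\sigma_h$) to justify the form of $Q^*$ and the equation for $\eta_h$, whereas the paper leaves this step implicit in the proof; this is a welcome clarification rather than a divergence.
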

\section{Proof of the main theorem}
\label{sec:proof}
In this section, we bring all ingredients together to prove the main theorem. Suppose that the assumption of the main theorem is true.
We set
\begin{equation}
\label{eq:a}
a(q,\sigma_0)=\frac{2C}{C_1(m,2)}\Kl\frac{C_0(m,2)}{C\sigma_0}\Kr^{m-1}=\frac{2C^{2-m}}{C_1(m,2)}\Kl\frac{C_0(m,2)}{\sigma_0}\Kr^{m-1}.
\end{equation}
\begin{equation}
\label{eq:b}
b(\sigma_0,q)=\frac{(2-q)C_1(q,1)}{C}\Kl\frac{C_0(q,1)}{\sigma_0\sqrt{C}}\Kr^{1-q}=\frac{(2-q)C_1(q,1)}{C^\frac{3-q}{2}}\Kl\frac{C_0(q,1)}{\sigma_0}\Kr^{1-q}.
\end{equation}
%

Let $Q^*$ be the minimizer in~\eqref{eq: J_h}. By Proposition~\ref{theo:optimal relative entropy used for main theorem}, we have
\begin{equation*}
Q^*=\N_m(\mu_0,\sigma_0^{2},\mu,\sigma^{2},\eta_h),
\end{equation*}
where $\eta_h=\eta(h,\sigma)$ satisfies the following equation
\begin{equation}
\label{eq:eqn for eta}
\frac{\eta_h}{(1-\eta_h^2)^\frac{3-m}{2}}=
\frac{\theta_h}{(1-\theta_h^2)^\frac{3-m}{2}}
\Kl\frac{\sigma}{\sigma_h}\Kr^{2-m}
=\frac{\frac{\sigma_0}{\sigma_h}}{\Kl1-\frac{\sigma_0^2}{\sigma_h^2}\Kr^\frac{3-m}{2}}\Kl\frac{\sigma}{\sigma_h}\Kr^{2-m}
=\frac{\sigma_0}{(\sigma_h^2-\sigma_0^2)^\frac{3-m}{2}}\sigma^{2-m}.
\end{equation}
Since $\abs{\eta_h}\leq 1$ and the right hand side of~\eqref{eq:eqn for eta} is positive, it holds that $0<\eta_h<1$.
Using the relationship $m=3-\frac{2}{q}$, we can rewrite the above equation as follows
\begin{equation}
\label{eq:eqn for eta 2}
\frac{\eta_h^q}{1-\eta_h^2}=\frac{\sigma_0^q\sigma^{(2-q)}}{\sigma_h^2-\sigma_0^2}.
\end{equation}

We now use the following statement whose proof is straightforward.

Assume that $x_0$ is given. For all $\epsilon>0$, and for all $h>0$ sufficiently small, there exists a constant $C= C(\epsilon,x_0)$ such that
\begin{equation}
\label{eq:statement}
(h+x_0)^\epsilon-x_0^\epsilon\leq Ch.
\end{equation}
In particular, if $\epsilon<1$ then $C=\epsilon x_0^{\epsilon-1}$.

Using~\eqref{eq:statement} for $\epsilon=\frac{2}{3-q}, x_0=\sigma^{3-q}$ and from~\eqref{eq:eqn for eta 2}, we get
\begin{equation}
\label{eq: 1-eta estimate}
1-\eta_h=\frac{\eta_h^q}{1+\eta_h}\frac{\sigma_h^2-\sigma_0^2}{\sigma_0^q\sigma^{(2-q)}}=\frac{\eta_h^q}{1+\eta_h}\frac{(h+\sigma_0^{3-q})^\frac{2}{3-q}-\sigma_0^2}{\sigma_0^q\sigma^{(2-q)}}\leq \frac{Ch}{\sigma^{(2-q)}},
\end{equation}
where $C>0$ is a constant depending only on $\sigma_0$ and $q$. This implies that for fixed $\sigma$, $\lim_{h\rightarrow 0}\eta_h=\lim_{h\rightarrow 0}\eta(h,\sigma)=1$ and as a sequence of functions $\eta(h,\cdot)\rightarrow 1$ locally uniform.



\begin{enumerate}
\item We now prove the first statement of the main theorem. We need to prove
\begin{equation}
a(\sigma_h^2-\sigma_0^2)^\frac{1}{q} J_h(\cdot|\N_0)\overset{\Gamma}{\rightarrow}W_2^2(\cdot,\N_0) \quad\text{as}~ h\rightarrow 0.
\end{equation}
Let $\N_q(\mu,C\sigma^2)$ be given and we denote it by $\N_q$ for short.
By Proposition~\ref{theo:optimal relative entropy used for main theorem}, we have
\begin{equation}
a(\sigma_h^2-\sigma_0^2)^\frac{1}{q}J_h(\N_q|\N_0)=C(\sigma-\sigma_0)^2+(\mu-\mu_0)^2+2C\sigma_0\sigma(1-\eta_h)+C(\sigma_h^2-\sigma_0^2)\Big[2\log_m\Kl\frac{\sigma_0}{\sigma\eta_h}\Kr^\frac{1}{3-m}-1\Big],
\end{equation}
and
\begin{equation}
W_2^2(\N_q,\N_0)=C(\sigma-\sigma_0)^2+(\mu-\mu_0)^2.
\end{equation}
For the lower bound part: Assume that $\N_q^h=\N_q(\nu_h,C\xi_h^2)\rightarrow \N_q$. This means that $(\nu_h-\mu)^2+(\xi_h-\sigma)^2\rightarrow 0$. Hence we can assume that $0<\frac{\sigma}{2}\leq\sup_h\xi_h\leq \frac{3}{2}\sigma$.

Let $\eta_h=\eta(h,\xi_h)$ be the solution of~\eqref{eq:eqn for eta} where $\sigma$ is replaced by $\xi_h$. By~\eqref{eq: 1-eta estimate} we have $\lim_{h\rightarrow}\eta_h=1$, so we can assume that $\xi_h\eta_h$ is uniformly bounded above and away from $0$. We now have

\begin{align*}
&a(\sigma_h^2-\sigma_0^2)^\frac{1}{q}J_h(\N_q^h|\N_0)
\\&\qquad=C(\xi_h-\sigma_0)^2+(\nu_h-\mu_0)^2+2C\sigma_0\xi_h(1-\eta_h)+C(\sigma_h^2-\sigma_0^2)\Big[2\log_m\Kl\frac{\sigma_0}{\xi_h\eta_h}\Kr^\frac{1}{3-m}-1\Big]
\\&\qquad\geq C(\xi_h-\sigma_0)^2+(\nu_h-\mu_0)^2+C(\sigma_h^2-\sigma_0^2)\Big[2\log_m\Kl\frac{\sigma_0}{\xi_h\eta_h}\Kr^\frac{1}{3-m}-1\Big].
\end{align*}
Hence
\begin{align*}
&\liminf_{h\rightarrow 0} a(\sigma_h^2-\sigma_0^2)^\frac{1}{q}J_h(\N_q^h|\N_0)
\\&\qquad\geq \liminf_{h\rightarrow 0} \Big\{C(\xi_h-\sigma_0)^2+(\nu_h-\mu_0)^2+C(\sigma_h^2-\sigma_0^2)\Big[2\log_m\Kl\frac{\sigma_0}{\xi_h\eta_h}\Kr^\frac{1}{3-m}-1\Big]\Big\}
\\&\qquad =C(\sigma-\sigma_0)^2+(\mu-\mu_0)^2=W_2^2(\N_q,\N_0).
\end{align*}
For the upper bound part: as a recovery sequence, we just simply take the fixed sequence $\N_q^h=\N_q$.
\item We now prove the second statement of the main theorem. We need to prove:
\begin{equation}
\label{eq: second statment proof }
ab(\sigma_h^2-\sigma_0^2)^\frac{1-q}{q} J_h(\cdot|\N_0)-\frac{b}{\sigma_h^2-\sigma_0^2}W_2^2(\cdot,\N_0)\overset{\Gamma}{\rightarrow}E_q(\cdot)-E_q(\N_0)\quad\text{as}~ h\rightarrow 0.
\end{equation}
Let $\N_q=\N_q(\mu,\sigma^2)$ be given. Let $\eta_h$ be the solution of~\eqref{eq:eqn for eta}. We have
\begin{align}
ab(\sigma_h^2-\sigma_0^2)^\frac{1-q}{q} J_h(\N_q|\N_0)&=bC\Big\{\frac{(\sigma-\sigma_0)^2}{\sigma_h^2-\sigma_0^2}+\frac{(\mu-\mu_0)^2}{C(\sigma_h^2-\sigma_0^2)}+2\frac{\sigma_0\sigma(1-\eta_h)}{\sigma_h^2-\sigma_0^2}+\Big[2\log_m\Kl\frac{\sigma_0}{\sigma\eta_h}\Kr^\frac{1}{3-m}-1\Big]\Big\},
\\\frac{b}{\sigma_h^2-\sigma_0^2}W_2^2(\N_q,\N_0)&=bC\Big[\frac{(\sigma-\sigma_0)^2}{\sigma_h^2-\sigma_0^2}+\frac{(\mu-\mu_0)^2}{C(\sigma_h^2-\sigma_0^2)}\Big],
\\E_q(\N_q)-E_q(\N_0)&=bC\log_q\Kl\frac{\sigma_0}{\sigma}\Kr.
\end{align}
Define
\begin{align*}
\mathcal{F}_h(\N_q;\N_0)&\colonequals\frac{2\sigma_0\sigma(1-\eta_h)}{\sigma_h^2-\sigma_0^2}+2\log_m\Kl\frac{\sigma_0}{\sigma\eta_h}\Kr^\frac{1}{3-m}-1,
\\\mathcal{F}(\N_q;\N_0)&\colonequals\log_q\Kl\frac{\sigma_0}{\sigma}\Kr.
\end{align*}
We need to prove
\begin{equation}
\mathcal{F}_h(\cdot;\N_0)\xrightarrow{\Gamma}\mathcal{F}(\cdot;\N_0).
\end{equation}
We first prove that $\mathcal{F}_h(\cdot,\N_0)\rightarrow \mathcal{F}(\cdot,\N_0)$ locally uniform. 
We now rewrite the $\mathrm{RHS}$ of $\mathcal{F}_h$ using the relationship between $m$ and $q$.

Since for any $t>0$
\[
\log_mt^\frac{1}{3-m}=\frac{t^\frac{1-m}{3-m}-1}{1-m}=\frac{t^{1-q}-1}{\frac{2(1-q)}{q}}=\frac{q}{2}\log_qt;
\]
Hence
\begin{equation}
\label{eq:first term of F_h}
2\log_m\Kl\frac{\sigma_0}{\sigma\eta_h}\Kr^\frac{1}{3-m}=q\log_q\Kl\frac{\sigma_0}{\sigma\eta_h}\Kr.
\end{equation}
From~\eqref{eq:eqn for eta}, we get
\begin{equation}
\label{eq:prelimit}
\frac{\eta_h}{(1+\eta_h)^\frac{3-m}{2}}=\Kl\frac{\sigma_0\sigma(1-\eta_h)}{\sigma_h^2-\sigma_0^2}\Kr^\frac{3-m}{2}\Kl\frac{\sigma}{\sigma_0}\Kr^\frac{1-m}{2}.
\end{equation}
This implies that
\begin{align}
\label{eq:second term of F_h}
\frac{\sigma_0\sigma(1-\eta_h)}{\sigma_h^2-\sigma_0^2}=\frac{\eta_h^q}{1+\eta_h} \Kl\frac{\sigma_0}{\sigma}\Kr^{1-q}=\frac{\eta_h^q}{1+\eta_h}\Kl(1-q)\log_q\Kl\frac{\sigma_0}{\sigma}\Kr+1\Kr.
\end{align}

From~\eqref{eq:first term of F_h} and~\eqref{eq:second term of F_h} we obtain
\begin{equation*}
\mathcal{F}_h(\N_q;\N_0)=2\frac{\eta_h^q}{1+\eta_h} \Kl\frac{\sigma_0}{\sigma}\Kr^{1-q}+q\log_q\Kl\frac{\sigma_0}{\sigma\eta_h}\Kr-1.
\end{equation*}
Now we have the following estimate
\begin{align}
\abs{\mathcal{F}_h-\mathcal{F}}&=\Big|2\frac{\eta_h^q}{1+\eta_h} \Kl\frac{\sigma_0}{\sigma}\Kr^{1-q}+q\log_q\Kl\frac{\sigma_0}{\sigma\eta_h}\Kr-\log_q\Kl\frac{\sigma_0}{\sigma}\Kr-1\Big|\nonumber
\\&=\Big|2\frac{\eta_h^q}{1+\eta_h} \Kl\frac{\sigma_0}{\sigma}\Kr^{1-q}-1-(1-q)\log_q\Kl\frac{\sigma_0}{\sigma}\Kr+q\log_q\Kl\frac{\sigma_0}{\sigma\eta_h}\Kr-q\log_q\Kl\frac{\sigma_0}{\sigma}\Kr\Big|\nonumber
\\&=\Kl\frac{\sigma_0}{\sigma}\Kr^{1-q}\Big| 2\frac{\eta_h^q}{1+\eta_h}-1+\frac{q}{1-q}\Kl\eta_h^{(q-1)}-1\Kr\Big|\nonumber
\\&\leq\Kl\frac{\sigma_0}{\sigma}\Kr^{1-q}\Big[\Big|2\frac{\eta^q}{1+\eta_h}-1\Big|+\Big|\frac{q}{1-q}\Kl\eta_h^{(q-1)}-1\Kr\Big|\Big]\nonumber
\\&\leq \frac{1}{\sigma^{1-q}}C(1-\eta_h)\nonumber
\\&\overset{\eqref{eq: 1-eta estimate}}\leq \frac{Ch}{\sigma^{3-2q}},\label{eq:estimate}
\end{align}
where $C$ is a constant depending only on $\sigma_0$ and q.


The locally uniform convergence of $\mathcal{F}_h$ to $\mathcal{F}$ thus follows from the estimate~\eqref{eq:estimate}.

For the $\Gamma$-convergence, we get the lower bound part by the local uniform convergence and the continuity of the entropy. Indeed, let $\N_q\colonequals\N_q(\mu,C\sigma^2)$ be given and assume that $\N_q^h\colonequals\N_q(\mu_h,C\xi_h^2)\rightarrow\N_q$. Then $\mu_h\rightarrow\mu$ and $\xi_h\rightarrow\sigma$. Hence we can assume without loss of generality that $\frac{\sigma}{2}\leq\sup_h\xi_h\leq\frac{3\sigma}{2}$. We have the following estimate
\begin{align*}
\Big|\mathcal{F}_h(\N_q^h;\N_q^0)-\mathcal{F}(\N_q;\N_q^0)\Big|
&\leq
\Big|\mathcal{F}_h(\N_q^h;\N_0)-\mathcal{F}(\N_q^h;\N_q^0)\Big|+
\Big|\mathcal{F}(\N_q^h;\N_0)-\mathcal{F}(\N_q;\N_q^0)\Big|
\\&\overset{\eqref{eq:estimate}}\leq \frac{Ch}{\xi_h^{3-2q}}+\Big|\log_q\Kl\frac{\sigma_0}{\xi_h}\Kr-\log_q\Kl\frac{\sigma_0}{\sigma}\Kr\Big|
\\&\leq \frac{Ch}{\sigma^{3-2q}}+\Big|\log_q\Kl\frac{\sigma_0}{\xi_h}\Kr-\log_q\Kl\frac{\sigma_0}{\sigma}\Kr\Big|\rightarrow 0.
\end{align*}
Therefore
\begin{equation*}
\lim_{h\rightarrow 0} \mathcal{F}_h(\N_q^h;\N_q^0)=\mathcal{F}(\N_q;\N_q^0).
\end{equation*}
For the upper part, as a recovery sequence, we can choose the fixed sequence $\N_q^h=\N_q$.
\item We now prove the third statement of the main theorem. Assume that $0<q< 1$. We need to prove

\[
ab(\sigma_h^2-\sigma_0^2)^\frac{1-q}{q} J_h(\cdot|\N_0)-\frac{1}{2h}W_2^2(\cdot,\N_0)\xrightarrow{\Gamma-\liminf} E_q(\cdot)-E_q(\N_0)\quad\text{as}~ h\rightarrow 0.
\]
We have
\begin{align*}
&ab(\sigma_h^2-\sigma_0^2)^\frac{1-q}{q} J_h(\cdot|\N_0)-\frac{1}{2h}W_2^2(\cdot,\N_0)
\\\qquad &=ab(\sigma_h^2-\sigma_0^2)^\frac{1-q}{q} J_h(\cdot|\N_0)-\frac{b}{\sigma_h^2-\sigma_0^2}W_2^2(\cdot,\N_0)+\Kl\frac{b}{\sigma_h^2-\sigma_0^2}-\frac{1}{2h}\Kr W_2^2(\cdot,\N_0).
\end{align*}
Since $0<q<1, 0<\frac{2}{3-q}<1$, using~\eqref{eq:estimate} for $\epsilon=\frac{2}{3-q},~x_0=\sigma_0^{3-q}$, we have
\[
\sigma_h^2-\sigma_0^2=(h+\sigma_0^{3-q})^\frac{2}{3-q}-\sigma_0^2\leq \frac{2}{(3-q)\sigma_0^{1-q}}h.
\]
Therefore
\[
\frac{b}{\sigma_h^2-\sigma_0^2}\geq \frac{(3-q)b\sigma_0^{1-q}}{2h}=\frac{1}{2h}.
\]
It implies that
\[
ab(\sigma_h^2-\sigma_0^2)^\frac{1-q}{q} J_h(\cdot|\N_0)-\frac{1}{2h}W_2^2(\cdot,\N_0)
\geq ab(\sigma_h^2-\sigma_0^2)^\frac{1-q}{q} J_h(\cdot|\N_0)-\frac{b}{\sigma_h^2-\sigma_0^2}W_2^2(\cdot,\N_0),
\]
and the third statement thus follows from the second one.
\end{enumerate}
This completes the proof of the main theorem.
\begin{center}
\textbf{Acknowledgement}
\end{center}
The research of the paper has received funding from the ITN ``FIRST" of the Seventh Framework Programme of the European Community (grant agreement number 238702). This project was done partially when the author was in the University of Bath as part of the ITN program. The author would like to thank Vaios Laschos for his helpful discussion.
\bibliographystyle{alpha}	
\bibliography{PMEbib}

\end{document}